\newtheorem{theorem}{Theorem}[section]
\newtheorem{lemma}[theorem]{Lemma}
\newtheorem{example}[theorem]{Example}
\newtheorem{proposition}[theorem]{Proposition}
\newtheorem{problem}[theorem]{Problem}
\newtheorem{corollary}[theorem]{Corollary}
\newcommand{\N}{\mathbb N}
\newcommand{\R}{\mathbb R}
\newcommand{\on}{\operatorname}
\author{Szymon G\l \c ab}
\address{Institute of Mathematics, \L \'od\'z University of Technology,
W\'olcza\'nska 215, 93-005 \L \'od\'z, Poland}
\email {szymon.glab@p.lodz.pl}
\author{Jacek Marchwicki}
\address{Institute of Mathematics, \L \'od\'z University of Technology,
W\'olcza\'nska 215, 93-005 \L \'od\'z, Poland}
\email {marchewajaclaw@gmail.com}
\title[Levy-Steinitz theorem and achievement sets]{Levy-Steinitz theorem and achievement sets of conditionally convergent series on the real plane}
\thanks{The second author has been supported by the National Science Centre Poland Grant no. DEC-2012/07/D/ST1/02087.}
\subjclass[2010]{Primary: 40A05 ; Secondary: 11K31} 
\keywords{achievement set, set of subsums, conditionally convergent series, sum range}
\begin{document}

\begin{abstract}
Levy-Steinitz theorem characterize sum range of conditionally convergent series, that is a set of all its convergent rearrangements; in finitely dimensional spaces -- it is an affine subspace. An achievement of a series is a set of all its subsums. We study the properties of achievement sets of series whose sum range is the whole plane. It turns out that it varies on the number of Levy vectors of a series.        
\end{abstract} 

\maketitle

\section{Introduction}

The old result due to Riemann states that for any $a\in\R$ and for any conditionally convergent series $\sum_{n=1}^\infty x_n$ of real numbers there is a permutation $\sigma$ of natural numbers such that the rearrangement $\sum_{n=1}^\infty x_{\sigma(n)}$ is convergent to $a$. This fact can be generalized to finitely dimensional spaces as follows. By $\on{SR}(x_n)$ denote the sum range of the series, that is the set of sums of all convergent rearrangements of the series $\sum_{n=1}^\infty x_n$. If $\sum_{n=1}^\infty x_n$ is conditionally convergent in $\R^m$, then the classical Levy-Steinitz Theorem states that $\on{SR}(x_n)$ is an affine subset of the underlying space. More precisely, $\on{SR}(x_n)=\sum_{n=1}^\infty x_n+\Gamma^{\perp}$ where $\Gamma^{\perp}$ is a subspace orthogonal to the set $\Gamma=\{f\in(\R^m)^*:\sum_{n=1}^\infty\vert f(x_n)\vert<\infty\}$ of all functionals of series convergence. The theory of rearrangements of conditionally convergent series in Banach spaces, and further in topological vector spaces, has been developed and deeply investigated by many authors; we refer the reader to the monograph \cite{KadKad} for details. 

An important tool to study sum range $\on{SR}(x_n)$ is a so-called Levy vector. A vector $u\in\mathbb{R}^2$, $\Vert u\Vert =1$ is called the Levy vector of a series $\sum_{n=1}^{\infty}v_n$ if for every $\varepsilon>0$ we have $\sum_{v_n\in S_{\varepsilon}(u)} \Vert v_{n}\Vert=\infty$, where  $S_{\varepsilon}(u)=\{v : \langle u,v\rangle \geq (1-\varepsilon) \Vert u\Vert \Vert v \Vert\}$. Absolutely convergent series has no Levy vectors. If $\sum_{n=1}^\infty x_n$, $x_n\in \R^m$, is conditionally convergent, then the set $\on{L}(x_n)$ of all its Levy vectors is non-empty and each closed half-sphere contains a Levy vector of $\sum_{n=1}^\infty x_n$. Moreover, the linear subspace $-\sum_{n=1}^\infty x_n+\on{SR}(x_n)$ of $\R^m$ contains a linear space spanned by $\on{L}(x_n)$. However in general $\on{span}(\on{L}(x_n))$ not need to be equal to $-\sum_{n=1}^\infty x_n+\on{SR}(x_n)$; for example if $x_n=(\frac{(-1)^n}{n},\frac{(-1)^n}{\sqrt{n}})$, then $\on{L}(x_n)=\{(0,1),(0,-1)\}$ but $\on{SR}(x_n)=\R^2$. From the sum range point of view there are only two types of conditionally convergent series $\sum_{n=1}^\infty x_n$ on the plane $\R^2$ -- those for which $\on{SR}(x_n)$ is a line and those for which $\on{SR}(x_n)=\R^2$. There is the other set which can be naturally attained to a series and which can distinguish series within these two classes -- the achievement set. 

For a sequence $(x_{n})$ in Banach space, we call the set $\on{A}(x_{n})=\{\sum_{n=1}^\infty\varepsilon_nx_n : (\varepsilon_{n})\in\{0,1\}^{\mathbb{N}}\}$ \emph{the set of subsums} or \emph{the achievement set}. This notion was mostly studied for absolutely summable sequences on the real line. Probably the first paper where topological properties of the achievement sets were investigated is that of Kakeya \cite{Kakeya}. He proved that such sets can be finite sets, finite unions of compact intervals or homeomorphic to the Cantor set. His conjecture that the achievement set can be only one of these three forms was disproved by Weinstein and Shapiro \cite{WS}, Ferens \cite{Ferens} and Guthrie and Nymann \cite{GN}. It is worth to mention that the motivation of Ferens' paper \cite{Ferens} came from measure theory; namely, the Author construct purely atomic probabilistic measure the range of which is neither finite union of intervals nor homeomorphic to the Cantor sets. Topological classification of achievement sets on the real line was given by Guthrie, Nymann and Saenz \cite{GN,NS1} who proved that they can be a finite set, a finite union of intervals, homeomorphic to the Cantor set or it can be a so called Cantorval (see also \cite{BFPW}). A Cantorval is a set homeomorphic to the union of the Cantor set and sets which are removed from the unite segment by even steps of the Cantor set construction. If underlying sequence is regular, for example multigeometric, then achievement sets are fractals. Fractal geometry of achievement sets were studied in \cite{BBFS,BG,BFS,MO,M1,M2}. 

The achievement sets of conditionally convergent series of real numbers is the whole real line \cite{BFPW,Jones,N1,N2}. Studies of achievement sets of conditionally convergent series in multidimensional spaces were initiated by us in \cite{BGM}. 
In that paper we focused mostly on the case when $\sum_{n=1}^\infty x_n$ is conditionally convergent on $\R^2$ and $\on{SR}(x_n)$ is a line. We made also a general observation that $\on{SR}(x_n)=\R^m$ if and only if the closure of $\on{A}(x_{n})$ equals $\R^m$ as well. There is also an example of series on the plane such that $\on{SR}(x_n)=\R^2$ and $\on{A}(x_{n})$ is dense and null. This paper is devoted to answer the following question. What need to be assumed on the series $\sum_{n=1}^\infty x_n$ with $\on{SR}(x_n)=\R^2$ to obtain $\on{A}(x_{n})=\R^2$? The answer depends firstly on the number of Levy vectors. 
In Section \ref{SectionMoreThan2Levy} we show that if a series has more than two Levy vectors, then $\on{A}(x_n)=\on{SR}(x_n)=\R^2$. We are able to prove even more: for any $a\in\R^2$ there is an increasing sequence $(n_k)$ of indexes such that $\sum_{k=1}^\infty x_{n_k}$ is absolutely convergent to $a$. In symbols $\on{A}_{\on{abs}}(x_{n})=\R^2$ where $\on{A}_{\on{abs}}(x_{n})=\{\sum_{n=1}^{\infty} \varepsilon_{n} x_{n}  : \sum_{n=1}^{\infty} \varepsilon_{n}\Vert x_{n}  \Vert<\infty,  \varepsilon_{n}\in\{0,1\} \ \text{for each} \  n\in\mathbb{N} \}$. Note that always $\on{A}_{\on{abs}}(x_{n})\subset\on{A}(x_n)$. In Section \ref{Section2Levy} we discuss the case when there are exactly two Levy vectors. We find a necessary condition for $\on{A}(x_n)=\R^2$ for series $\sum_{n=1}^\infty x_n$ with $\vert \on{L}(x_n)\vert=2$. We give an example of series $\sum_{n=1}^\infty x_n$ with two Levy vectors and such that $\on{A}_{\on{abs}}(x_n)\neq \on{A}(x_n)=\R^2$. Finally in Section \ref{SectionExample} we give an example of series such that $\on{SR}(x_n)=\R^2$, $\vert \on{L}(x_n)\vert=2$ and $\on{A}(x_n)$ is a graph of a partial function, more precisely all its vertical sections have got at most one element. 

The idea of using Levy vectors to analyze conditionally convergent series come to us from Klinga's paper \cite{Klinga} who in turn used the ideas from Halperin's paper \cite{H}. Klinga used Levy vectors to study ideal version of Levy-Steinitz theorem and in the proof of the main result of \cite{Klinga} he distinguish between situation when a series has two Levy vectors or more than two. We found this distinction crucial in our consideration.

\section{Series with more than two Levy vectors}\label{SectionMoreThan2Levy}

We start this section with a simple observation.

\begin{lemma}\label{LemmaApprox}
Let $(a_n)$ be a sequence of positive real numbers such that $a_n\to 0$ and $\sum_{n=1}^\infty a_n=\infty$. Let $x,\varepsilon>0$ and $N\in\N$. There exists finite set of indexes $F$ such that $x-\varepsilon<\sum_{n\in F}a_n<x$ and $N<n$ for every $n\in F$.  
\end{lemma}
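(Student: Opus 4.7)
The plan is to use a greedy/incremental construction: add one index at a time from the tail $\{N+1, N+2, \dots\}$ and stop as soon as the running sum crosses the threshold $x-\varepsilon$. The divergence of the series guarantees we will eventually cross the threshold, and the assumption $a_n\to 0$ will prevent overshoot past $x$.

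More concretely, first I would choose an integer $M\geq N$ large enough that $a_n<\min(\varepsilon,x)$ for every $n>M$. This is possible because $a_n\to 0$. Note that $\sum_{n>M}a_n=\infty$ since only finitely many terms were dropped. Define the partial sums of the tail by $S_0=0$ and $S_k=\sum_{j=1}^{k}a_{M+j}$ for $k\geq 1$. Since $S_k\to\infty$, there is a least index $k\geq 1$ with $S_k>x-\varepsilon$. I would then claim that $F=\{M+1,\dots,M+k\}$ is the desired set.

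To verify the claim, note $F$ consists of indices strictly greater than $N$ by construction. The lower bound $\sum_{n\in F}a_n=S_k>x-\varepsilon$ holds by the choice of $k$. For the upper bound, the minimality of $k$ gives $S_{k-1}\leq x-\varepsilon$, hence
\[
S_k=S_{k-1}+a_{M+k}\leq (x-\varepsilon)+a_{M+k}<(x-\varepsilon)+\varepsilon=x,
\]
where the strict inequality comes from $a_{M+k}<\varepsilon$. The edge case $x\leq \varepsilon$ is automatically absorbed: if already $a_{M+1}>x-\varepsilon$ we take $k=1$, and since $a_{M+1}<x$ by our choice of $M$, the bound $S_1<x$ still holds.

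I do not expect any real obstacle here; the only subtlety is ensuring strict (rather than non-strict) inequalities in both directions, which is exactly why $M$ is chosen so that every remaining term is strictly smaller than both $\varepsilon$ and $x$.
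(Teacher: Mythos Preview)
Your argument is correct and complete. The paper itself does not supply a proof of this lemma; it is introduced only as ``a simple observation'' and left to the reader, so your greedy construction is exactly the kind of routine verification the authors had in mind.
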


The notion of Levy vectors was investigated by Halperin in \cite{H} who proved the following auxiliary result.

\begin{proposition}\label{minimumdwa} Let $\sum_{n=1}^{\infty}v_n$ be a conditionally convergent series on the plane. \begin{itemize}
\item[(i)] If $\on{SR}(v_n)=\mathbb{R}^2$, then in every closed half unite circle there is a Levy vector.
\item[(ii)] $\on{L}(v_n)$ is a closed subset of the unit circle. 
\item[(iii)] $u$ is a Levy vector of a series $\sum_{n=1}^{\infty}v_n$ if and only if there exists a subsequence $(k_n)$ such that $v_{k_n}=\alpha_{n}u+w_{n}$, where $(\alpha_{n})$ is a sequence of positive real numbers tending to zero such that $\sum_{n=1}^{\infty}\alpha_{n}=\infty$ and $\sum_{n=1}^{\infty}\Vert w_{n}\Vert<\infty$.
\end{itemize}
\end{proposition}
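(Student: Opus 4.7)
The plan is to prove the three parts in the order (iii), (ii), (i), since the structural characterization in (iii) motivates the angle-chasing in (ii), and (i) is cleanest by contradiction combined with the Levy--Steinitz description of the sum range.

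For (iii), the ``if'' direction should be a direct estimate. Given $v_{k_n} = \alpha_n u + w_n$, I would split each $w_n = \beta_n u + w_n^{\perp}$ with $w_n^{\perp}\perp u$. The membership $v_{k_n}\in S_{\varepsilon}(u)$ is then equivalent (after squaring) to $\Vert w_n^{\perp}\Vert^2\leq c_{\varepsilon}(\alpha_n+\beta_n)^2$, with $c_\varepsilon = (2\varepsilon - \varepsilon^2)/(1-\varepsilon)^2$. Splitting indices into ``good'' (this inequality together with $\alpha_n+\beta_n\ge 0$) and ``bad'' and using $\sum |\beta_n|\le \sum\Vert w_n\Vert < \infty$ together with $\Vert w_n^{\perp}\Vert\le\Vert w_n\Vert$, one sees the bad set contributes only finitely to $\sum \alpha_n$; hence the good indices still force $\sum_{v_k\in S_\varepsilon(u)} \Vert v_k\Vert = \infty$.

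For the ``only if'' direction I would construct the subsequence inductively. Setting $\alpha_k := \langle u, v_k\rangle$ and $w_k := v_k - \alpha_k u$, at step $n$ I restrict attention to indices $k > \max F_{n-1}$ with $v_k \in S_{1/n}(u)$; on this index set $(\alpha_k)$ is positive, tends to zero, and has divergent sum. Lemma \ref{LemmaApprox} then yields a finite $F_n$ with $\sum_{k \in F_n}\alpha_k \in (1/n - 1/n^2, 1/n)$. Enumerating $\bigcup_n F_n$ in increasing order gives the desired subsequence: $\alpha_{k_n}\to 0$ since $v_k\to 0$, $\sum \alpha_{k_n} = \infty$ since each block contributes $\approx 1/n$, and $\sum \Vert w_{k_n}\Vert < \infty$ because inside $S_{1/n}(u)$ one has $\Vert w_k\Vert \leq \Vert v_k\Vert\sqrt{2/n-1/n^2}$, so the $n$-th block contributes $O(n^{-3/2})$.

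Part (ii) follows from a short angle argument: if $u_n\to u$ on the unit circle and $\varepsilon>0$, then for sufficiently large $n$ the inclusion $S_{\varepsilon/2}(u_n)\subset S_\varepsilon(u)$ holds, since both cones are characterized by an angular bound and the angle between $u_n$ and $u$ tends to zero. The divergence of $\sum_{v_k\in S_{\varepsilon/2}(u_n)} \Vert v_k\Vert$ then transfers to $S_\varepsilon(u)$. For (i), I would argue by contradiction: if a closed half unit circle $H_u := \{w \in S^1 : \langle u, w\rangle \geq 0\}$ contains no Levy vector, then for each $v\in H_u$ there is $\varepsilon_v > 0$ with $\sum_{v_k\in S_{\varepsilon_v}(v)}\Vert v_k\Vert < \infty$. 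By compactness, finitely many of the arcs $S_{\varepsilon_v}(v)\cap S^1$ cover $H_u$, and by the conic nature of the $S_\varepsilon(\cdot)$ the corresponding cones cover the closed half plane $\{w: \langle u, w\rangle\ge 0\}$; hence $\sum_{\langle u, v_k\rangle\ge 0}\Vert v_k\Vert<\infty$. This makes the positive part of the real series $\sum\langle u, v_k\rangle$ absolutely summable; combined with convergence of $\sum \langle u, v_k\rangle$, it forces $\sum|\langle u, v_k\rangle|<\infty$, contradicting the fact that $\on{SR}(v_n)=\R^2$ implies $\Gamma=\{0\}$ by Levy--Steinitz. The main obstacle throughout is the parameter-balancing in the inductive construction for (iii): the blocks must contribute $\approx 1/n$ to $\sum \alpha_k$ while contributing summably to $\sum \Vert w_k\Vert$, which is delivered precisely by the geometric gain $\sqrt{2/n}$ available inside $S_{1/n}(u)$.
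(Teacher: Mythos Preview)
The paper does not supply its own proof of this proposition: it attributes the result to Halperin's paper \cite{H} and states it without argument. So there is no in-paper proof to compare against; your proposal fills a genuine gap in exposition rather than competing with an existing argument.

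Your proof is correct in substance. In (iii), the block construction with $\sum_{k\in F_n}\alpha_k\in(1/n-1/n^2,1/n)$ and the angular estimate $\Vert w_k\Vert\leq\Vert v_k\Vert\sqrt{2/n-1/n^2}$ inside $S_{1/n}(u)$ do yield the summable $O(n^{-3/2})$ tail you claim; just note that you should start at $n\geq 2$ so that $(1-1/n)>0$ guarantees $\alpha_k>0$ on $S_{1/n}(u)$, and that Lemma~\ref{LemmaApprox} is being applied to the reindexed subsequence of $(\alpha_k)$ over $\{k>\max F_{n-1}:v_k\in S_{1/n}(u)\}$, which still has divergent sum since removing finitely many indices and using $\alpha_k\geq(1-1/n)\Vert v_k\Vert$ preserves divergence. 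The angle inclusion $S_{\varepsilon/2}(u_n)\subset S_\varepsilon(u)$ in (ii) follows since $\arccos(1-\varepsilon/2)<\arccos(1-\varepsilon)$ gives a positive angular margin. In (i), your compactness-and-covering argument is clean; the passage from ``arcs cover $H_u$'' to ``cones cover the closed half plane'' holds because $S_\varepsilon(v)$ is radially invariant, and the final step---that $\sum_{\langle u,v_k\rangle\geq 0}\Vert v_k\Vert<\infty$ together with convergence of $\sum\langle u,v_k\rangle$ forces absolute convergence of the full scalar series---is exactly the contradiction with $\Gamma=\{0\}$ coming from $\operatorname{SR}(v_n)=\mathbb{R}^2$.
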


Let us give a couple of examples to illustrate the notion of Levy vectors. 
\begin{itemize}
\item The series $\sum_{n=1}^\infty(\frac{(-1)^n}{n},0)$ and $\sum_{n=1}^\infty(\frac{(-1)^n}{n},\frac{(-1)^n}{n})$ have one-dimensional sum ranges and therefore each of them has exactly two Levy vectors. 
\item The sum range of the series $\sum_{n=1}^\infty(\frac{(-1)^n}{n},\frac{(-1)^n}{\sqrt{n}})$ is the whole plane, while it has only two Levy vectors.
\item Let $z\in\mathbb{C}$ be a complex number such that $\vert z\vert=1$, which is not a root of unity, that is $z^n\neq 1$ for every $n\in\mathbb{N}$. Then $\sum_{n=1}^{\infty}\frac{z^n}{n}$ is a conditionally convergent series. Moreover the set of all Levy vectors of  $\sum_{n=1}^{\infty}\frac{z^n}{n}$ is the whole unit circle.
\item Let $x_{2n-1}=(0,\frac{(-1)^n}{n})$, $x_{2n}=(\frac{(-1)^n}{n},0)$ for every $n\in\mathbb{N}$. Then $\on{L}(x_n)=\{(0,-1),(-1,0),(0,1),(1,0)\}$.
\item Let $x_{3n-2}=(-\frac{1}{3n-2},-\frac{1}{3n-2})$, $x_{3n-1}=(\frac{1}{3n-1},0)$, $x_{3n}=(0,\frac{1}{3n})$ for every $n\in\mathbb{N}$. Then $\on{L}(x_n)=\{(-1,-1),(1,0),(0,1)\}$.
\item Let $x_{3n-2}=(-\frac{1}{3n-2},-\frac{1}{\sqrt{3n-2}})$, $x_{3n-1}=(\frac{1}{3n-1},0)$, $x_{3n}=(0,\frac{1}{\sqrt{3n}})$ for every $n\in\mathbb{N}$. Then $\on{L}(x_n)=\{(0,-1),(1,0),(0,1)\}$. Note that $\on{L}(x_n)$ is disjoint with open half space $\{(x,y):x<0\}$.  
\end{itemize}

We say that $H$ is a \emph{central open half space} if $H$ is a rotation of $\{(x,y):x>0\}$. The last example shows that the set $\on{L}(x_n)$ of Levy vectors can be disjoint with some central open half space even when the sum range of the underlying series is the whole plane.

Let $v_1,v_2$ be vectors on the plane. Put $\on{span^{+}}(v_1,v_2):=\{av_1+bv_2 : a,b>0\}$. If $v_1,v_2$ are colinear, then $\on{span^{+}}(v_1,v_2)$ is the line or halfline;  If $v_1,v_2$ are non-colinear, then $\on{span^{+}}(v_1,v_2)$ is the interior of the cone consisting of the points laying between half-lines $\{cv_1 : c>0\}$ and  $\{cv_2 : c>0\}$. From Proposition \ref{minimumdwa} we immediately obtain the following. 
\begin{corollary} \label{centralrozpiety}
Assume that a series  $\sum_{n=1}^{\infty}v_n$ has at least three Levy vectors. Then there exists a central open half space $H$ and $u,v,w\in \on{L}(v_n)$ such that $H\subset \on{span^{+}}(u,v)\cup \on{span^{+}}(v,w)\cup L$, where $L$ is a halfline $\{av: a>0\}$.
\end{corollary}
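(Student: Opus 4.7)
The plan is to order the Levy vectors angularly around the unit circle, use Proposition~\ref{minimumdwa}(i) to bound each consecutive gap by $\pi$, and then pick a pivot $v$ together with two wings $u,w$ lying on opposite angular sides of $v$ whose short-way arcs to $v$ sum to at least $\pi$. The union $\on{span^+}(u,v)\cup\on{span^+}(v,w)\cup L$ will then form a single contiguous open sector of angular width $\geq\pi$, which contains an open half plane $H$.

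First I would argue that having at least three Levy vectors forces $\on{SR}(v_n)=\R^2$: by the Levy--Steinitz theorem $\on{SR}(v_n)$ is an affine subspace, and if it were a proper line then $\on{L}(v_n)$ would reduce to the two unit vectors in that line's direction. Proposition~\ref{minimumdwa}(i) then gives a Levy vector in every closed half unit circle, which precludes any angular gap between consecutive Levy vectors from exceeding $\pi$ (a longer gap would contain a closed half circle missing $\on{L}(v_n)$). After rotating so that one Levy vector lies at angle $0$, I enumerate $\on{L}(v_n)=\{e^{i\beta_1},\ldots,e^{i\beta_n}\}$ with $0=\beta_1<\cdots<\beta_n<2\pi$ and let $k$ be the smallest index with $\beta_k\geq\pi$, which exists because the wrap-around gap $2\pi-\beta_n$ is at most $\pi$. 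In the generic case $\beta_k<\beta_{k-1}+\pi$ (which forces $k\geq 3$ and $\beta_{k-1}\in(0,\pi)$), I set $u=e^{i\beta_1}$, $v=e^{i\beta_{k-1}}$, $w=e^{i\beta_k}$: then $u$ is clockwise from $v$ by $\beta_{k-1}\in(0,\pi)$ while $w$ is counterclockwise from $v$ by $\beta_k-\beta_{k-1}\in(0,\pi)$, the two short-way arcs sum to $\beta_k\geq\pi$, and the two open cones joined with the ray $L=\{tv:t>0\}$ fill the desired open sector.

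The main obstacle is the antipodal degeneracy $\beta_k=\beta_{k-1}+\pi$ (which includes the case $k=2$, forcing $\beta_2=\pi$): here $\on{span^+}(v,w)$ collapses from an open cone to a line and the sweep above no longer has angular width $\pi$. I would resolve this by reselecting the pivot. Writing the antipodal Levy pair as $\{p,-p\}=\{e^{i\beta_{k-1}},e^{i\beta_k}\}$ and picking any third Levy vector $q\notin\{p,-p\}$ (which exists since $n\geq 3$, and is then automatically off the line $\R p$), I take $v=q$, $u=p$, $w=-p$. Antipodality forces the short-way arcs from $q$ to $p$ and from $q$ to $-p$ to run in opposite rotational senses and to sum to exactly $\pi$, so $\on{span^+}(q,p)\cup\on{span^+}(q,-p)\cup\{tq:t>0\}$ equals an open half plane, and the conclusion follows.
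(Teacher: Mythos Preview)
The paper offers no proof for this corollary beyond the sentence ``From Proposition~\ref{minimumdwa} we immediately obtain the following,'' so your argument is far more detailed than what appears there, and the overall strategy---pivot plus two wings whose short arcs sum to at least $\pi$---is sound.

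There is, however, one genuine gap. You write ``I enumerate $\on{L}(v_n)=\{e^{i\beta_1},\ldots,e^{i\beta_n}\}$,'' tacitly assuming $\on{L}(v_n)$ is finite, and you then use this finiteness when you speak of the wrap-around gap $2\pi-\beta_n$ and when you select $\beta_{k-1},\beta_k$ as \emph{consecutive} elements of the list (so that the arc between them contains no other Levy vector and hence has length $\le\pi$). But $\on{L}(v_n)$ need not be finite: the paper's own example $\sum z^n/n$ has $\on{L}$ equal to the entire unit circle. The repair is short and uses Proposition~\ref{minimumdwa}(ii). After rotating so that $e^{i0}\in\on{L}(v_n)$, first dispose of the case where $\on{L}(v_n)$ contains some antipodal pair $\{p,-p\}$: your second paragraph already handles this, with $v$ any third Levy vector and $u=p$, $w=-p$. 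If no antipodal pair exists, then in particular $e^{i\pi}\notin\on{L}(v_n)$; set
\[
\alpha^*=\sup\{\theta\in[0,\pi):e^{i\theta}\in\on{L}(v_n)\},\qquad
\beta^*=\inf\{\theta\in(\pi,2\pi):e^{i\theta}\in\on{L}(v_n)\}.
\]
Closedness of $\on{L}(v_n)$ together with the closed-half-circle property forces both sets to be nonempty, $e^{i\alpha^*},e^{i\beta^*}\in\on{L}(v_n)$, $\alpha^*\in(0,\pi)$, $\beta^*\in(\pi,2\pi)$, and $\beta^*-\alpha^*<\pi$ (equality would produce an antipodal pair). Taking $u=e^{i0}$, $v=e^{i\alpha^*}$, $w=e^{i\beta^*}$ then gives a contiguous open sector of angular width $\beta^*>\pi$, exactly as in your generic case.
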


\begin{lemma} \label{przyblizanie}
Let $\sum_{n=1}^{\infty}x_n$ be a conditionally convergent series with Levy vectors $u,v$ and $x\in span^{+}(u,v)$. 
There are $\delta>0$, $(c_n)$ of positive numbers and a sequence of finite, pairwise disjoint sets of indexes $(P_n)$ such that
\begin{itemize}
\item[(i)] $c_n<\frac{\delta}{2^{n}}$ and $\frac{1}{2}c_n<c_{n+1}<c_{n}$;
\item[(ii)] $\min P_{n+1}>\max P_n$;
\item[(iii)] $\Vert\sum_{k=1}^{n} y_k - (1-\frac{1}{2^{n}})x\Vert<c_n$ where $y_k=\sum_{i\in P_k}x_{i}$;
\item[(iv)] $\sum_{k\in P_{n+1}}\Vert x_k\Vert\leq \frac{1}{2}(c_{n+1}-\frac{c_n}{2})+\frac{1}{2}(a_n+b_n)$, where $a_n,b_n$ are such that $x-\sum_{k=1}^{n} y_k=a_nu+b_nv$.
\end{itemize}
\end{lemma}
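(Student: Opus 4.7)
My plan is an induction in which $y_{n+1}$ targets the halving correction $\tfrac12(a_n u+b_n v)$, so that the residual $E_n:=\sum_{k=1}^n y_k-(1-2^{-n})x$ is (almost) halved at each step. For the setup I apply Proposition~\ref{minimumdwa}(iii) separately to $u$ and to $v$ to extract \emph{disjoint} subsequences $(k^u_i)$, $(k^v_j)$ with decompositions $x_{k^u_i}=\alpha^u_i u+w^u_i$ and $x_{k^v_j}=\alpha^v_j v+w^v_j$ satisfying the Levy data; disjointness is available because $S_\varepsilon(u)\cap S_\varepsilon(v)=\emptyset$ for $\varepsilon$ small (since $u\neq v$), so the two subsequences may be drawn from disjoint sectors. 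I write $x=\alpha u+\beta v$ with $\alpha,\beta>0$, fix a constant $K$ with $|s|,|t|\le K\|su+tv\|$ for all scalars $s,t$, and pick $\delta<\min(\alpha,\beta)/(2K)$; then (iii) will force $a_n,b_n>\min(\alpha,\beta)/2^{n+1}$ at every step, so $a_n/2$ is always a legitimate positive target for Lemma~\ref{LemmaApprox}.

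Inductively, given $P_1,\ldots,P_n$ and $c_1,\ldots,c_n$ satisfying (i)--(iv), I pick $c_{n+1}\in(c_n/2,\min(c_n,\delta/2^{n+1}))$ (a nonempty interval by the inductive bound $c_n<\delta/2^n$), set $\tau:=\tfrac13(c_{n+1}-c_n/2)>0$, and choose $N>\max P_n$ so large that the $w$-tails past $N$ are each below $\tau/2$. Lemma~\ref{LemmaApprox} applied to $(\alpha^u_i)_{i>N}$ with target $a_n/2$ and slack $\tau$ produces a finite $P^u_{n+1}$ with $a_n/2-\tau<A:=\sum_{i\in P^u_{n+1}}\alpha^u_i<a_n/2$, and analogously I build $P^v_{n+1}$ with the corresponding $B$; then $P_{n+1}:=P^u_{n+1}\sqcup P^v_{n+1}$. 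Conditions (i) and (ii) are automatic. Writing $y_{n+1}=Au+Bv+W$ and using the identity $E_n=x/2^n-a_n u-b_n v$, the recurrence $E_{n+1}=E_n+y_{n+1}-x/2^{n+1}$ collapses to $E_{n+1}=\tfrac12 E_n+(A-a_n/2)u+(B-b_n/2)v+W$, whose norm is strictly less than $c_n/2+3\tau=c_{n+1}$, which is (iii). For (iv), the bound $\|x_{k^u_i}\|\le\alpha^u_i+\|w^u_i\|$ (and its $v$-analogue) yields
$\sum_{k\in P_{n+1}}\|x_k\|\le A+B+\sum_{P^u_{n+1}}\|w^u_i\|+\sum_{P^v_{n+1}}\|w^v_j\|<\tfrac12(a_n+b_n)+\tau<\tfrac12(c_{n+1}-c_n/2)+\tfrac12(a_n+b_n)$.

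The main obstacle I anticipate is the calibration of $\delta$ at the outset: it must be small enough---relative to $\alpha,\beta$ and the obliqueness constant $K$ of the basis $(u,v)$---that the inductive hypothesis (iii) perpetually forces $a_n,b_n$ to stay of order $1/2^n$. When $u$ and $v$ are close to collinear, $K$ is large and $\delta$ must be correspondingly tiny; but since $\delta$ is fixed once and all the subsequent tolerances $\tau$, $c_{n+1}-c_n/2$, and $w$-tails are nested inside it, the entire induction goes through for such a $\delta$.
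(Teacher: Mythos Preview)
Your proof is correct and follows essentially the same scheme as the paper's: both arguments target the halving correction $\tfrac12(a_n u+b_n v)$ at each step, obtain the recurrence $E_{n+1}=\tfrac12 E_n+(\text{small})$, and control the norm bound (iv) via $A+B+(\text{$w$-tails})$. The only differences are cosmetic: the paper sets $\delta=\min\{d(x,U),d(x,V)\}$ geometrically (so that the ball $B(x/2^n,\delta/2^n)$ stays inside the cone) where you use the obliqueness constant $K$, and the paper splits the tolerance $\varepsilon_t=c_{t+1}-c_t/2$ into four pieces of size $\varepsilon_t/4$ whereas you use three pieces of size $\tau=\tfrac13(c_{n+1}-c_n/2)$; neither choice affects the argument. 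Your remark on disjointness of the $u$- and $v$-subsequences via narrow sectors is a detail the paper simply asserts.
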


\begin{proof}
Fix $x\in\on{span^{+}}(u,v)$, $x=a_{0}u+b_{0}v$ for $a_{0},b_{0}>0$. Let $\delta=\min\{d(x,U),d(x,V)\}$, where $U=\{cu: c>0\}$, $V=\{cv : c>0\}$ and $d(x,X)$ is a distance between a point $x$ and a set $X$. We define $(c_n)$ by simple induction. Find $\frac{\delta}{4}<c_1<\frac{\delta}{2}$. Then take $\frac{c_1}{2}<c_2<\frac{\delta}{4}$ and so on. Let $x_{k_n}=\alpha_{k_n}^{u}u+w_{k_n}^{u}$ and  $x_{m_n}=\alpha_{m_n}^{v}v+w_{m_n}^{v}$ for $n\in\mathbb{N}$ satisfy the assertion of Proposition \ref{minimumdwa}(iii). Moreover assume that $(k_n)$ and $(m_n)$ are disjoint. Using Lemma \ref{LemmaApprox} we find finite sets of indexes $A_1\subset\{k_n:n\in\N\}$ and $B_1\subset\{m_n:n\in\N\}$ such that $\frac{a_{0}}{2}-\frac{c_1}{4}<\sum_{n\in A_1} \alpha_{n}^{u}<\frac{a_{0}}{2}$, $\sum_{n\in A_1}\Vert w^u_n\Vert<\frac{c_1}{4}$, $\frac{b_{0}}{2}-\frac{c_1}{4}<\sum_{n\in B_1} \alpha_{n}^{v}<\frac{b_{0}}{2}$ and $\sum_{n\in B_1}\Vert w^v_n\Vert<\frac{c_1}{4}$.
Observe that
\[\Vert \sum_{n\in A_1} x_n +  \sum_{n\in B_1}x_n-\frac{x}{2}\Vert=
\Vert \sum_{n\in A_1} x_n +  \sum_{n\in B_1} x_n-\frac{a_{0}}{2}u-\frac{b_{0}}{2}v\Vert=
\Vert \sum_{n\in A_1} (\alpha_n^{u}u +w_{n}^{u}) + \sum_{n\in B_1} (\alpha_{n}^{v}v +w_{n}^{v})-\frac{a_{0}}{2}u-\frac{b_{0}}{2}v\Vert=
\]
\[
\Vert \sum_{n\in A_1} \alpha_{n}^{u}u-\frac{a_{0}}{2}u +\sum_{n\in A_1}w_{n}^{u} + \sum_{n\in B_1} \alpha_{n}^{v}v-\frac{b_{0}}{2}v +\sum_{n\in B_1}w_{n}^{v}\Vert\leq  
\]
\[
\vert \sum_{n\in A_1} \alpha_{n}^{u}-\frac{a_{0}}{2}\vert \Vert u\Vert + \sum_{n\in A_1}\Vert w_{n}^{u}\Vert + \vert\sum_{n\in B_1} \alpha_{n}^{v}-\frac{b_{0}}{2}\vert \Vert v \Vert+\Vert\sum_{n\in B_1}w_{n}^{v}\Vert<
4\cdot \frac{c_1}{4}=c_1.\]
Put $P_{1}:=A_1\cup B_1$. Thus $\Vert y_{1}-\frac{x}{2}\Vert<c_{1}$.

Assume that we have constructed $(P_k)_{k=1}^{t}$ for some $t\in\mathbb{N}$. We know that $\Vert \sum_{k=1}^{t} y_k - (1-\frac{1}{2^{t}})x\Vert<c_t$, so $\Vert(x- \sum_{k=1}^{t} y_k)-\frac{1}{2^{t}}x \Vert<c_t<\frac{\delta}{2^t}$. It easy to see that $\min\{d(\frac{x}{2^t},U),d(\frac{x}{2^{t}},V)\}=\frac{\delta}{2^t}$, so $x- \sum_{k=1}^{t} y_k\in B(\frac{1}{2^{t}}x,\frac{\delta}{2^t})$, which implies $x- \sum_{k=1}^{t} y_k\in \on{span}^{+}(u,v)$. Let $x- \sum_{k=1}^{t} y_k=a_t u+b_t v$. Let $A_{t+1}\subset\{k_n>\max P_t:n\in\N\}$ and $B_{t+1}\subset\{m_n>\max P_t:n\in\N\}$ be finite sets such that $\sum_{n\in A_{t+1}}\Vert w_{n}^{u}\Vert<\frac{\varepsilon_t}{4}$, $\sum_{n\in B_{t+1}}\Vert w_{n}^{v}\Vert<\frac{\varepsilon_t}{4}$, $\frac{a_{t}}{2}-\frac{\varepsilon_t}{4}<\sum_{n\in A_{t+1}} \alpha_{n}^{u}<\frac{a_{t}}{2}$ and $\frac{b_{t}}{2}-\frac{\varepsilon_t}{4}<\sum_{n\in B_{t+1}} \alpha_{n}^{v}<\frac{b_{t}}{2}$ where $\varepsilon_t:=c_{t+1}-\frac{1}{2}c_{t}$. Put $P_{t+1}=A_{t+1}\cup B_{t+1}$. Thus
$$
\Vert \sum_{k=1}^{t} y_k+2y_{t+1}-x  \Vert=
\Vert x- \sum_{k=1}^{t} y_k-2(\sum_{n\in A_{t+1}} x_{n}+\sum_{n\in B_{t+1}}x_{n})\Vert=$$
$$\Vert a_{t}u+b_{t}v -2\sum_{n\in A_{t+1}} \alpha_{n}^{u}u-2\sum_{n\in A_{t+1}}w_{n}^{u}-2\sum_{n\in B_{t+1}}\alpha_{n}^{v}v-2\sum_{n\in B_{t+1}}w_{n}^{v}\Vert\leq $$
$$2\vert \frac{a_t}{2}-\sum_{n\in A_{t+1}} \alpha_{n}^{u}\vert \Vert u\Vert + 2\sum_{n\in A_{t+1}}\Vert w_{n}^{u}\Vert+2\vert \frac{b_t}{2}-\sum_{n\in B_{t+1}} \alpha_{n}^{v}\vert \Vert v\Vert + 2\sum_{n\in B_{t+1}}\Vert w_{n}^{v}\Vert\leq 2\cdot 4\cdot \frac{\varepsilon_{t}}{4}=2\varepsilon_{t}.
$$
Hence
\[
\Vert \sum_{k=1}^{t+1} y_k - (1-\frac{1}{2^{t+1}})x\Vert\leq\frac{1}{2}\Vert\sum_{k=1}^{t} y_k - (1-\frac{1}{2^{t}})x\Vert+\frac{1}{2}\Vert \sum_{k=1}^{t} y_k+2y_{t+1}-x  \Vert<\frac{1}{2}c_{t}+\varepsilon_{t}=c_{t+1}.
\]
Moreover for any $t\in\N$ we have
\[
\sum_{l\in P_{t+1}}\Vert x_{l}\Vert=
\sum_{n\in A_{t+1}}\Vert  \alpha_{n}^{u}u +w_{n}^{u} \Vert+\sum_{n\in B_{t+1}}\Vert \alpha_{n}^{v}v +w_{n}^{v}\Vert\leq 
\sum_{n\in A_{t+1}} (\vert\alpha_{n}^{u}\vert+\Vert w_{n}^{u} \Vert)+\sum_{n\in B_{t+1}}(\vert \alpha_{n}^{v}\vert +\Vert w_{n}^{v}\Vert)\leq
\]
\[
\frac{a_{t}}{{2}}+\frac{\varepsilon_{t}}{4}+\frac{b_{t}}{{2}} +\frac{\varepsilon_{t}}{4}=\frac{1}{2}\varepsilon_{t}+\frac{1}{2}a_t +\frac{1}{2}b_t.
\]

\end{proof}

\begin{theorem}\label{span}
Let $\sum_{n=1}^{\infty}x_n$ be a conditionally convergent series with two non-colinear Levy vectors $u,v$. Then $\on{span^{+}}(u,v)\subset \on{A}_{\on{abs}}(x_n)$.
\end{theorem}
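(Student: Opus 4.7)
The plan is to apply Lemma \ref{przyblizanie} to the given $x\in\on{span^{+}}(u,v)$, extract the disjoint index sets $(P_k)$, and define $\varepsilon_n:=1$ if $n\in\bigcup_k P_k$ and $\varepsilon_n:=0$ otherwise. With $y_k=\sum_{i\in P_k}x_i$ as in the lemma, property (iii) combined with $c_n\to 0$ (from (i)) yields $\sum_{k=1}^K y_k\to x$ as $K\to\infty$. The entire remaining task is to verify absolute summability
\[
\sum_{n=1}^\infty \varepsilon_n\|x_n\|=\sum_{k=1}^\infty\sum_{i\in P_k}\|x_i\|<\infty.
\]
Once this is known, $\sum_n\varepsilon_n x_n$ converges unconditionally, and since (ii) guarantees that the natural ordering on $\bigcup_k P_k$ inherited from $\N$ coincides with the block-by-block ordering, its value must equal $\sum_k y_k=x$. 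This places $x$ in $\on{A}_{\on{abs}}(x_n)$.

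For the absolute summability, property (iv) reduces the problem to showing that $\sum_n c_n$ and $\sum_n(a_n+b_n)$ are finite. The first is immediate from (i), which gives $c_n<\delta/2^n$. The bound on $a_n+b_n$ is the only step that uses a new idea, and this is precisely where the non-colinearity of $u,v$ enters: since $u,v$ are linearly independent, the linear map $(a,b)\mapsto au+bv$ is an isomorphism of $\R^2$, hence by equivalence of norms there exists a constant $C>0$ with $|a|+|b|\leq C\|au+bv\|$ for all $a,b\in\R$. Using $a_n u+b_n v=x-\sum_{k=1}^n y_k$ together with (iii) and the triangle inequality
\[
\Big\|x-\sum_{k=1}^n y_k\Big\|\leq\frac{\|x\|}{2^n}+\Big\|\sum_{k=1}^n y_k-\Big(1-\frac{1}{2^n}\Big)x\Big\|<\frac{\|x\|}{2^n}+c_n,
\]
one obtains $a_n+b_n\leq C(c_n+\|x\|/2^n)$, which is summable. (That $a_n,b_n>0$ is already noted in the proof of Lemma \ref{przyblizanie}, since $x-\sum_{k=1}^n y_k\in\on{span^{+}}(u,v)$.) The block $P_1$ needs no separate estimate because it is finite.

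I do not anticipate a serious obstacle beyond pinning down the coefficient bound $|a|+|b|\leq C\|au+bv\|$ furnished by the non-colinear hypothesis on $u,v$; everything else is bookkeeping assembled from the output of Lemma \ref{przyblizanie}, and it is worth remarking that the factors $\tfrac12$ appearing in (iv) were tailored precisely so that the two pieces above are each summable.
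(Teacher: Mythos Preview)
Your proposal is correct and follows essentially the same route as the paper's proof: both apply Lemma \ref{przyblizanie}, use (iv) to reduce to the summability of $(c_n)$ and $(a_n+b_n)$, handle $(c_n)$ via (i), and bound $a_n+b_n$ by a constant times $\|a_nu+b_nv\|=\|x-\sum_{k\leq n}y_k\|$ using (iii). The only stylistic difference is that the paper extracts the constant $C$ explicitly from the angle between $u$ and $v$ (via $\|u-\cos(\alpha)v\|$), whereas you invoke the abstract equivalence of norms coming from the linear independence of $u,v$; this is the same estimate packaged differently.
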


\begin{proof}
Let $x\in \on{span^{+}}(u,v)$. Let $(P_t)$ be the sequence of sets of indexes from the assertion of Lemma \ref{przyblizanie}.  We need to show $\sum_{l\in\bigcup_{t=1}^{\infty}P_{t}} \Vert x_{l}\Vert$ is convergent. Note that 
\[
\sum_{t=1}^{\infty}(c_{t+1}-\frac{1}{2}c_t)\leq\sum_{t=1}^{\infty}(c_{t}-\frac{1}{2}c_{t})\leq \frac{1}{2}\sum_{t=1}^{\infty}\frac{\delta}{2^t}=\frac{\delta}{2}<\infty
\]
Moreover, if $\alpha\in(0,\pi)$ is such that $\cos(\alpha)=\vert\langle u,v\rangle\vert$, then for each $t\in\mathbb{N}$ we have
$\Vert a_{t}u+b_{t}v\Vert\geq \Vert a_{t}u-a_t \cos(\alpha)v\Vert\geq a_t\Vert u-\cos(\alpha)v\Vert$. Therefore by Lemma  \ref{przyblizanie}(iii) we obtain
\[
a_t\leq\frac{\Vert a_{t}u+b_{t}v\Vert}{\Vert u-\cos(\alpha)v\Vert}\leq \frac{c_{t}+\frac{1}{2^t}\Vert x\Vert}{\Vert u-\cos(\alpha)v\Vert}
\]
and consequently 
\[
\sum_{t=1}^{\infty}a_{t}\leq \frac{1}{\Vert u-\cos(\alpha)v\Vert} \sum_{t=1}^{\infty} (c_{t}+\frac{1}{2^t}\Vert x\Vert)<\frac{\delta+\Vert x\Vert}{\Vert u-\cos(\alpha)v\Vert}<\infty.
\]
In similar way one can show that $\sum_{t=1}^{\infty}b_{t}<\infty$. Thus by Lemma \ref{przyblizanie}(iv)
\[
\sum_{l\in\bigcup_{t=1}^{\infty}P_{t+1}} \Vert x_{l}\Vert=\sum_{t=1}^{\infty}\sum_{l\in P_{t+1}}\Vert x_{l}\Vert\leq\frac{1}{2}\sum_{t=1}^{\infty}a_t +\frac{1}{2}\sum_{t=1}^{\infty}b_t +\frac{1}{2}\sum_{t=1}^{\infty}(c_{t+1}-\frac{1}{2} c_t)<\infty.
\]
Since $P_1$ is a finite set we have $\sum_{l\in\bigcup_{t=1}^{\infty}P_{t}} \Vert x_{l}\Vert<\infty$. Hence $\sum_{l\in\bigcup_{t=1}^{\infty}P_{t}}x_{l}$ is convergent. By Lemma \ref{przyblizanie}(i) we know that $c_n\to 0$, so by Lemma \ref{przyblizanie}(iii) we obtain that $x=\sum_{l\in\bigcup_{t=1}^{\infty}P_{t}}x_{l}$. 
\end{proof}

We show that there is a strict connection between geometry of the set of Levy vectors and geometry of $\on{A}_{\on{abs}}(x_n)$. 

\begin{theorem}\label{openhalfcircle}
Let $\sum_{n=1}^\infty x_n$ be a conditionally convergent series on the real plane. If any open half circle contains at least one Levy vector of the series, then $\on{A}_{\on{abs}}(x_n)=\R^2$. 
\end{theorem}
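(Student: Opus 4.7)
The plan is to reduce Theorem \ref{openhalfcircle} to a three-Levy-vector analogue of Theorem \ref{span}, which is proved by essentially the same iterative construction as Lemma \ref{przyblizanie} but with three subsequences instead of two.

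First, the hypothesis forces $\on{L}(x_n)$ not to lie in any closed half-plane: otherwise, if $\on{L}(x_n)\subset\{y : \langle u_0,y\rangle\geq 0\}$ for some unit $u_0$, the opposite open half-circle would be disjoint from $\on{L}(x_n)$, contradicting the assumption. Hence $0\in\operatorname{int}(\operatorname{conv}\on{L}(x_n))$, so by Carath\'eodory's theorem one may pick three Levy vectors $u,v,w\in\on{L}(x_n)$ together with a relation $\lambda u+\mu v+\nu w=0$ for some $\lambda,\mu,\nu>0$. Using this relation to absorb any nonnegative representation, every $x\in\R^2\setminus\{0\}$ can be written as $x=au+bv+dw$ with $a,b,d>0$; the case $x=0$ is handled by the empty subsum.

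Next I would mirror the proofs of Lemma \ref{przyblizanie} and Theorem \ref{span} with three subsequences. By Proposition \ref{minimumdwa}(iii), pick pairwise disjoint subsequences $(k_n^u),(k_n^v),(k_n^w)$ with $x_{k_n^u}=\alpha_n^u u+w_n^u$ (and analogues for $v,w$). Build disjoint finite index sets $(P_n)$ inductively, $P_{n+1}$ being a union of three Lemma \ref{LemmaApprox}-selections taken from the three subsequences (with indices strictly greater than $\max P_n$) that approximate the halved coordinates $a_n/2,b_n/2,d_n/2$ of the current residual, where $(a_n,b_n,d_n)$ is the decomposition of $x-\sum_{k=1}^n y_k$ in the fixed expansion $x=au+bv+dw$, kept close to $(a/2^n,b/2^n,d/2^n)$ inductively. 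Setting $y_n=\sum_{k\in P_n}x_k$, a three-coordinate analogue of Lemma \ref{przyblizanie}(i)--(iv) yields a sequence $c_n<\delta/2^n$ with $\|\sum_{k=1}^n y_k-(1-2^{-n})x\|<c_n$ and $\sum_{k\in P_{n+1}}\|x_k\|\leq \frac{1}{2}(c_{n+1}-\frac{1}{2}c_n)+\frac{1}{2}(a_n+b_n+d_n)$. Absolute convergence of $\sum_{l\in\bigcup_n P_n}x_l$ then follows exactly as in the proof of Theorem \ref{span}, since $(a_n),(b_n),(d_n)$ and $(c_{n+1}-c_n/2)$ all decay geometrically; convergence of the subseries to $x$ is immediate from $c_n\to 0$.

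The main obstacle is maintaining strict positivity of the three residual coordinates $a_n,b_n,d_n$ throughout the iteration. Unlike the two-vector setting of Lemma \ref{przyblizanie}, where $(u,v)$ form a basis of the cone they span and the decomposition of any residual in that cone is unique, three Levy vectors in $\R^2$ form a redundant spanning family, so one must commit to the initial expansion $x=au+bv+dw$ at the outset and always refer residuals back to it. Then $\delta$ has to be chosen small enough in terms of $\min(a,b,d)$ and the angular geometry of $(u,v,w)$ so that the accumulated approximation error never drives any of the three coordinates below zero.
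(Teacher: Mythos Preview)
Your approach is workable in outline but takes a considerably longer route than the paper's. Rather than rerunning the iterative construction of Lemma~\ref{przyblizanie} with three subsequences, the paper simply applies the already-proved Theorem~\ref{span} as a black box to each of the pairs $(u,v)$, $(v,w)$, $(w,u)$ among three suitably chosen Levy vectors. The resulting open cones $\on{span}^+(u,v)$, $\on{span}^+(v,w)$, $\on{span}^+(w,u)$ cover $\R^2\setminus\{0\}$ except for the three half-lines $\on{span}^+(u)$, $\on{span}^+(v)$, $\on{span}^+(w)$. A point $x$ on such a half-line is then reached in two steps: write $x=y+(x-y)$ with $y$ in one adjacent cone and $x-y$ in the other, use Theorem~\ref{span} to obtain $y=\sum_{n\in E}x_n$ absolutely, observe that removing an absolutely convergent subseries does not change the set of Levy vectors, and apply Theorem~\ref{span} once more to $(x_n)_{n\notin E}$. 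This modular argument completely sidesteps the positivity-of-residuals obstacle you describe, because every invocation of Theorem~\ref{span} involves only two non-collinear vectors and hence a unique cone decomposition.

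There is also a genuine gap in your vector selection. Carath\'eodory's theorem only guarantees that $0$ is a convex combination of \emph{at most} three Levy vectors, and the combination may degenerate to two antipodal ones; for instance when $\on{L}(x_n)=\{\pm e_1,\pm e_2\}$, every triple contains an antipodal pair and $0$ lies on an edge, not in the interior, of the corresponding triangle, so no relation $\lambda u+\mu v+\nu w=0$ with $\lambda,\mu,\nu>0$ exists. The paper's claim of three pairwise non-collinear Levy vectors has the analogous defect for this $\on{L}(x_n)$, but its pairwise method adapts trivially by covering the plane with four cones instead of three; your three-subsequence construction would instead need a separate treatment of this degenerate case.
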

\begin{proof}
Since every open half circle contains at least one Levy vector we can find three Levy vectors $u,v,w$ of the series $\sum_{n=1}^\infty x_n$, which satisfies $\on{span}(u)\neq \on{span}(v)\neq \on{span}(w)\neq \on{span}(u)$ and 
$$\on{span^{+}}(u,v)\cup \on{span^{+}}(v,w)\cup\on{span^{+}}(w,u)\cup \on{span^{+}}(u)\cup\on{span^{+}}(v)\cup \on{span^{+}}(w)=\mathbb{R}^2$$ 
where $\on{span^{+}}(z)=\{az: a>0\}$. By Theorem \ref{span} we have $\mathbb{R}^2\setminus(\on{span^{+}}(u)\cup\on{span^{+}}(v)\cup \on{span^{+}}(w))\subset \on{A}_{\on{abs}}(x_n)$. Fix $x\in\on{span^{+}}(u)$. There exists $y\in  \on{span^{+}}(u,v)$ such that $x-y\in \on{span^{+}}(u,w)$. Hence by Theorem \ref{span} one can find $E\subset\mathbb{N}$ such that $y=\sum_{n\in E} x_n$, where $\sum_{n\in E} \Vert x_n\Vert<\infty$. Since an absolutely convergent series does not affect Levy vectors, we have $\on{A}((x_n)_{n\in\mathbb{N}\setminus E})\supset \on{span^{+}}(u,v)\cup \on{span^{+}}(v,w)\cup \on{span^{+}}(w,u)$, especially $x-y=\sum_{n\in F} x_n$ and $\sum_{n\in F} \Vert x_n\Vert<\infty$ for some $F\subset\mathbb{N}\setminus E$. Hence $\sum_{n\in E\cup F}x_n=\sum_{n\in E}x_n+\sum_{n\in F}x_n=y+x-y=x$, so $x\in \on{A}_{\on{abs}}(x_n)$. If $x\in \on{span^{+}}(v)$ or  $x\in \on{span^{+}}(w)$ then the proof is analogous. Hence  $\on{A}_{\on{abs}}(x_n)=\R^2$. 
\end{proof}

\begin{theorem}\label{3LevyVectorsAndABS}
Let $\sum_{n=1}^\infty x_n$ be a conditionally convergent series on the real plane with more than two Levy vectors. Then $\on{A}_{\on{abs}}(x_n)=\R^2$.
\end{theorem}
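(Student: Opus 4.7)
The plan is to cover one open half plane using the cone machinery, then extend to $\R^2$ by case analysis. First, $|\on{L}(x_n)| > 2$ forces $\on{SR}(x_n) = \R^2$ (since a line sum range would admit only two antipodal Levy vectors along it), so Proposition~\ref{minimumdwa}(i) ensures every closed half circle carries a Levy vector. Corollary~\ref{centralrozpiety} then supplies a central open half space $H$ and three Levy vectors $u, v, w$ with $H \subset \on{span}^{+}(u,v) \cup \on{span}^{+}(v,w) \cup \on{span}^{+}(v)$. Theorem~\ref{span} places both open cones into $\on{A}_{\on{abs}}(x_n)$, and for $x$ on the ray $\on{span}^{+}(v)$ I would use the disjoint-indices decomposition from the proof of Theorem~\ref{openhalfcircle}: write $x = y + (x - y)$ with $y \in \on{span}^{+}(u,v)$ and $x - y \in \on{span}^{+}(v,w)$, realize $y$ absolutely convergently on some $E \subset \N$, and then realize $x - y$ absolutely convergently on $\N \setminus E$ (valid since removing an absolutely convergent subseries preserves the Levy vectors). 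This gives $H \subset \on{A}_{\on{abs}}(x_n)$.

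To extend to $\R^2$ I would split into two cases. If every open half circle contains a Levy vector, Theorem~\ref{openhalfcircle} applies directly and yields $\on{A}_{\on{abs}}(x_n) = \R^2$. Otherwise some open half circle $\Omega$ is Levy-free; combined with Proposition~\ref{minimumdwa}(i) this forces the largest angular gap in $\on{L}(x_n)$ to equal $\pi$, so an antipodal pair $u_0, -u_0 \in \on{L}(x_n)$ lies on the boundary $\partial \Omega$ while all other Levy vectors lie in $\overline{H} = \R^2 \setminus \Omega$. The cone arguments of Corollary~\ref{centralrozpiety} and Theorem~\ref{span} cannot reach $\Omega$, since any positive combination of vectors in $\overline{H}$ stays in $\overline{H}$. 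Instead one must use the actual terms $x_n$ whose direction lies in $\Omega$; the sum of their norms is infinite, for otherwise the terms in $\overline{H}$ would form a conditionally convergent series trapped in $\overline{H}$ with $|\on{L}| > 2$, forcing its sum range to be $\R^2$ and yielding a contradiction. Using these \emph{stray} $\Omega$-terms together with the $\pm u_0$-Levy contributions on $\partial H$, Lemma~\ref{LemmaApprox}, and the usual disjoint-index iterative refinement, one realizes any prescribed $x \in \Omega$ as an absolutely convergent subsum.

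The main obstacle is this second case: once all Levy vectors lie in a single closed half plane, the cone-based machinery cannot reach the opposite open half plane, and one must descend to a direct term-by-term construction using the stray terms in $\Omega$, orchestrating them against the $\pm u_0$-Levy contributions on $\partial H$ while carefully controlling the tail norms to preserve absolute convergence throughout.
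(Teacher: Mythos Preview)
Your first phase---covering an open half plane $H$ via Corollary~\ref{centralrozpiety}, Theorem~\ref{span}, and the disjoint-index splitting from the proof of Theorem~\ref{openhalfcircle}---matches the paper exactly. The case split according to whether every open half circle contains a Levy vector is also the right reduction.

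Where you diverge from the paper is in the remaining case, and here you are working much harder than necessary, and the hard work is not obviously going to succeed. Once $H\subset\on{A}_{\on{abs}}(x_n)$ is in hand, the paper disposes of every $x\notin H$ in one line: since $\overline{\on{A}(x_n)}=\on{SR}(x_n)=\R^2$, finite subset sums are dense in the plane, so there is a \emph{finite} $E\subset\N$ with $x-\sum_{n\in E}x_n\in H$. Removing finitely many terms does not alter the Levy vectors, so the tail series still satisfies $H\subset\on{A}_{\on{abs}}((x_n)_{n\in\N\setminus E})$, and we pick $F\subset\N\setminus E$ realising $x-\sum_{n\in E}x_n$ absolutely. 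Then $E\cup F$ realises $x$, and absolute convergence is automatic because $E$ is finite. No iteration, no stray-term bookkeeping, and the boundary line $\partial H$ is handled simultaneously with $\Omega$.

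Your proposed direct construction in $\Omega$ has a genuine gap as stated. Because $\Omega$ contains no Levy vector, the mass of the $\Omega$-terms is forced (by compactness of closed subarcs) to concentrate near the boundary directions $\pm u_0$; equivalently the ratio $\Vert x_n\Vert/\vert x_n^{(1)}\vert$ is unbounded along the $\Omega$-terms you are compelled to use at later stages. So fixing a first-coordinate error of size $\epsilon$ with late $\Omega$-terms may cost an arbitrarily large amount of norm, and there is no mechanism in your sketch guaranteeing that the norm contributions across the iterative refinement are summable. The $\pm u_0$-corrections do not touch the first coordinate, so they cannot rescue this. Your scheme might produce a point of $\on{A}(x_n)$, but the claim $x\in\on{A}_{\on{abs}}(x_n)$ does not follow from what you have written. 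The paper's finite-offset trick sidesteps this entirely.
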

\begin{proof}
By Theorem \ref{openhalfcircle} it is enough to show that for a series with $\on{L}(x_n)=\{u,-u,v\}$, where $v\notin span(u)$, we have $\on{A}_{\on{abs}}(x_n)=\R^2$. Without losing generality we may assume $u=(0,1)$ and $v\in H=\{(x,y): x>0\}$. By Theorem \ref{span} and by using method of the proof of Theorem \ref{openhalfcircle} we get inclusion $H\subset\on{A}_{\on{abs}}(x_n)$. 
Fix $x\notin H$. Since $\overline{A(x_n)}=\on{SR}(x_n)=\R^2$ one can find finite set $E\subset\mathbb{N}$ such that $x-\sum_{n\in E}x_n\in H$. We have $L((x_n)_{n\in\mathbb{N}\setminus E})=\{u,-u,v\}$, so $\on{A}_{\on{abs}}((x_n)_{n\in\mathbb{N}\setminus E})\supset H$. Let $F\subset\mathbb{N}\setminus E$ be such that $\sum_{n\in F}x_n=x-\sum_{n\in E}x_n$ and $\sum_{n\in F}\Vert x_n\Vert<\infty$. Hence $\sum_{n\in E\cup F}x_n=x$.
\end{proof}


\section{Series with two Levy vectors}\label{Section2Levy}

In this section we give the sufficient condition for series $\sum_{n=1}^\infty v_n$ with two Levy vectors for $\on{A}(v_n)=\R^2$. We call this condition as reduction property. We say that a series $\sum_{n=1}^{\infty} (x_{n},y_{n})$ has a \emph{reduction property} if $\lim_{n\to\infty}\frac{\vert y_n\vert}{\vert x_n\vert}=\infty$ and for every $\varepsilon>0$, $x\in(-\varepsilon,\varepsilon)$, $N\in\mathbb{N}$ there exists a finite set of indices $A=\{k_1<k_2<\ldots<k_m\}$ with $k_1\geq N$ such that $\vert\sum_{n\in A}x_n-x\vert<\frac{\varepsilon}{2}$ and $\max_{1\leq j\leq m} \vert \sum_{n=1}^{j}x_{k_n}\vert<\varepsilon$ and $\max_{1\leq j\leq m} \vert \sum_{n=1}^{j}y_{k_n}\vert<\varepsilon$.

\begin{proposition}\label{plaszczyznasumrange}
Let $\sum_{n=1}^{\infty} (x_{n},y_{n})$ be a series, which has the reduction property. If $\sum_{n=1}^{\infty} (x_{n},y_{n})$ is a conditionally convergent series, then its sum range is the whole plane, in symbols $\on{SR} (x_{n},y_{n})=\mathbb{R}^2$. 
\end{proposition}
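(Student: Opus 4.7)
The plan is to apply the Levy-Steinitz description recalled in the introduction: for a conditionally convergent series on $\R^2$,
\[
\on{SR}(x_n,y_n) \;=\; \sum_{n=1}^\infty (x_n,y_n) + \Gamma^\perp,
\qquad
\Gamma = \{f \in (\R^2)^* : \sum_{n=1}^\infty |f(x_n,y_n)| < \infty\}.
\]
It therefore suffices to show $\Gamma = \{0\}$, since then $\Gamma^\perp = \R^2$ and the sum range is the whole plane. Every nonzero $f \in (\R^2)^*$ has the form $f(x,y) = \alpha x + \beta y$ with $(\alpha,\beta) \ne (0,0)$, so I would split into the two cases $\beta \ne 0$ and $\beta = 0$.

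The case $\beta \ne 0$ I would handle first, using only the ratio hypothesis $|y_n|/|x_n| \to \infty$ from the reduction property. The triangle inequality together with this hypothesis yields $|\alpha x_n + \beta y_n| \ge \tfrac{1}{2}|\beta|\,|y_n|$ for all sufficiently large $n$, and the same hypothesis gives $\|(x_n,y_n)\|/|y_n| \to 1$. Conditional convergence forces $\sum \|(x_n,y_n)\| = \infty$, hence $\sum |y_n| = \infty$, and so $\sum |\alpha x_n + \beta y_n| = \infty$. This step is essentially automatic.

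The case $\beta = 0$, $\alpha \ne 0$ is the heart of the matter and is where the approximation clause of the reduction property enters. Here it suffices to show $\sum_{n=1}^\infty |x_n| = \infty$, and I would argue by contradiction. If $\sum |x_n| < \infty$, fix any $\varepsilon > 0$ and choose $N \in \N$ so large that $\sum_{n \ge N} |x_n| < \varepsilon/4$. Taking the target $x = 3\varepsilon/4 \in (-\varepsilon,\varepsilon)$, every finite $A \subset \{n \ge N\}$ then satisfies
\[
\Bigl|\sum_{n \in A} x_n - x\Bigr| \;\ge\; \tfrac{3}{4}\varepsilon - \Bigl|\sum_{n \in A} x_n\Bigr| \;>\; \tfrac{3}{4}\varepsilon - \tfrac{1}{4}\varepsilon \;=\; \tfrac{1}{2}\varepsilon,
\]
directly violating the reduction property. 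Hence $\sum |x_n| = \infty$, the functional $f(x,y) = \alpha x$ fails to lie in $\Gamma$, and the proof concludes.

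The main obstacle is mild and sits entirely in the second case: one must choose the target $x$ strictly inside $(-\varepsilon,\varepsilon)$ and close enough to $\varepsilon$ so that the absolute-summability tail bound genuinely forces $|\sum_{n \in A} x_n - x|$ to strictly exceed $\varepsilon/2$. The partial-sum bounds on the $x_{k_n}$ and $y_{k_n}$ sequences in the definition of the reduction property play no role in this argument; only the approximation clause for the $x$-sum is invoked.
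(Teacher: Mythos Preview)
Your proof is correct and follows essentially the same approach as the paper: reduce to showing $\Gamma=\{0\}$ via Levy--Steinitz, and rule out nonzero functionals by proving $\sum|x_n|=\infty$ (via the reduction property with target $x=\tfrac{3}{4}\varepsilon$) and $\sum|y_n|=\infty$, then handle the mixed case using $|y_n|/|x_n|\to\infty$. The only superficial differences are that you argue $\sum|x_n|=\infty$ by contradiction whereas the paper extracts infinitely many disjoint blocks each contributing at least $\varepsilon/4$, and you obtain $\sum|y_n|=\infty$ from $\sum\|(x_n,y_n)\|=\infty$ rather than from $\sum|x_n|=\infty$; both routes are equally valid.
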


\begin{proof}
Observe that $\sum_{n=1}^{\infty}  x_{n}$ can not be absolutely convergent, namely we get $\sum_{n=1}^{\infty} \vert x_{n}\vert=\infty$. Indeed fix $\varepsilon>0$, $x=\frac{3}{4}\varepsilon$. Then one can find a family of pairwise disjoint finite sets of indices $(A_{t})_{t=1}^{\infty}$ with $\min A_{t+1}>\max A_t$ such that  $\vert\sum_{n\in A_t}x_n-x\vert<\frac{\varepsilon}{2}$ for every $t\in\mathbb{N}$. We have $\sum_{n\in A_t}\vert x_n\vert\geq \vert\sum_{n\in A_t}x_n\vert\geq x-\vert\sum_{n\in A_t}x_n-x\vert>\frac{\varepsilon}{4}$ for each $t\in\mathbb{N}$. Hence $\sum_{n=1}^{\infty} \vert x_{n}\vert\geq\sum_{t=1}^{\infty} \sum_{n\in A_t}\vert x_n\vert\geq \sum_{t=1}^{\infty}\frac{\varepsilon}{4}=\infty$.
 Since $\lim_{n\to\infty}\frac{\vert y_n\vert}{\vert x_n\vert}=\infty$, we have $\sum_{n=1}^{\infty} \vert y_{n}\vert=\infty$. Let $f\in(\mathbb{R}^2)^{*}$ be a convergence functional for a series $\sum_{n=1}^{\infty} (x_{n},y_{n})$, that is $f(x,y)=ax+by$ for some $a,b\in\mathbb{R}$ and $\sum_{n=1}^{\infty} \vert f(x_{n},y_{n})\vert<\infty$. Clearly any of two cases, $a=0,b\neq 0$ or $a\neq 0,b=0$, leads to the contradiction. Suppose that $a\neq 0$ and $b\neq 0$. Let $M$ be a natural number such that $\frac{\vert y_{n}\vert}{\vert x_{n}\vert}>\frac{2a}{b}$ for every $n\geq M$. Thus $\sum_{n=1}^{\infty} \vert ax_{n}+by_{n}\vert\geq\sum_{n=M}^{\infty} \vert ax_{n}+by_{n}\vert\geq a\sum_{n=M}^{\infty} (\frac{b}{a} \vert y_{n}\vert-\vert x_{n}\vert)\geq a\sum_{n=M}^{\infty} \vert x_{n}\vert=\infty$. It means that the series has only one (trivial) convergence functional, so $\on{SR} (x_{n},y_{n})=\mathbb{R}^2$.
\end{proof}

\begin{theorem}\label{reductionplane}
Assume that  $\sum_{n=1}^{\infty} (x_{n},y_{n})$ is a conditionally convergent series, which has the reduction property. Then $L(x_{n},y_{n})=\{(0,1),(0,-1)\}$.
\end{theorem}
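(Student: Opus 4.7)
First I would use Proposition \ref{plaszczyznasumrange}, already available in the excerpt, to conclude that $\on{SR}(x_n,y_n)=\R^2$. This unlocks Proposition \ref{minimumdwa}(i): every closed half of the unit circle contains at least one Levy vector of $\sum_{n=1}^\infty (x_n,y_n)$. So to pin down $\on{L}(x_n,y_n)$ exactly, it will suffice to show the inclusion $\on{L}(x_n,y_n)\subseteq\{(0,1),(0,-1)\}$, and then read off the reverse inclusion from the half-circle principle applied to the upper and lower halves.

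The heart of the argument is the inclusion $\on{L}(x_n,y_n)\subseteq\{(0,1),(0,-1)\}$. Suppose for contradiction that $u=(u_1,u_2)$ is a Levy vector with $u_1\neq 0$. The set $S_\varepsilon(u)$ is the cone of vectors making angle at most $\arccos(1-\varepsilon)$ with $u$, and for $\varepsilon>0$ small enough this cone is strictly separated from the $y$-axis. Consequently there exists a constant $M>0$, depending only on $u$ and $\varepsilon$, such that every $v=(v_1,v_2)\in S_\varepsilon(u)$ with $v_1\neq 0$ satisfies $\vert v_2\vert\leq M\vert v_1\vert$. The reduction property gives $\vert y_n\vert/\vert x_n\vert\to\infty$, so only finitely many indices $n$ can satisfy $(x_n,y_n)\in S_\varepsilon(u)$. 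Therefore $\sum_{(x_n,y_n)\in S_\varepsilon(u)}\Vert(x_n,y_n)\Vert<\infty$, contradicting the defining property of a Levy vector.

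To finish, I would apply Proposition \ref{minimumdwa}(i) to the closed upper half-circle $\{(x,y):x^2+y^2=1,\ y\geq 0\}$: it must contain a Levy vector, and by the inclusion just established the only candidate is $(0,1)$, so $(0,1)\in\on{L}(x_n,y_n)$. The analogous reasoning with the lower half-circle produces $(0,-1)\in\on{L}(x_n,y_n)$, yielding equality $\on{L}(x_n,y_n)=\{(0,1),(0,-1)\}$.

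The only delicate point is the planar-geometry estimate producing the constant $M$; it amounts to the statement that $S_\varepsilon(u)$ shrinks to the ray $\{tu:t>0\}$ as $\varepsilon\to 0$ and hence for small $\varepsilon$ is disjoint from a conical neighborhood of the $y$-axis. This is an elementary computation once one writes the cone condition in terms of the angle, and does not deserve to be expanded in detail. Everything else in the plan is a direct appeal to Propositions \ref{minimumdwa} and \ref{plaszczyznasumrange}, so the reduction-property hypothesis is used exactly twice: once (through Proposition \ref{plaszczyznasumrange}) to force $\on{SR}=\R^2$, and once through $\vert y_n\vert/\vert x_n\vert\to\infty$ to exclude non-vertical Levy vectors.
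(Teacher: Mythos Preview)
Your proof is correct and follows essentially the same architecture as the paper's: rule out any Levy vector with nonzero first coordinate using $\vert y_n\vert/\vert x_n\vert\to\infty$, then invoke Proposition~\ref{minimumdwa}(i) to force $(0,\pm 1)\in\on{L}(x_n,y_n)$. The only technical difference is that you work directly from the cone definition of a Levy vector (bounding $\vert v_2\vert/\vert v_1\vert$ on $S_\varepsilon(u)$), whereas the paper instead appeals to the equivalent characterization in Proposition~\ref{minimumdwa}(iii) to extract a subsequence with $y_{k_n}/x_{k_n}\to b/a$; both yield the same contradiction in one line. Your explicit citation of Proposition~\ref{plaszczyznasumrange} to justify the hypothesis $\on{SR}=\R^2$ of Proposition~\ref{minimumdwa}(i) is a welcome bit of care that the paper leaves implicit.
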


\begin{proof}
Suppose that $(\frac{a}{\sqrt{a^2+b^2}},\frac{b}{\sqrt{a^2+b^2}})$ for some $a\neq 0$, $b\in\mathbb{R}$ is a Levy vector of a series $\sum_{n=1}^{\infty} (x_{n},y_{n})$. By Proposition \ref{minimumdwa}(iii) there exists a subseries $\sum_{n=1}^{\infty} (x_{k_n},y_{k_n})$ with $\lim_{n\to\infty}\frac{y_{k_n}}{x_{k_n}}=\frac{b}{a}$. Since $\lim_{n\to\infty}\frac{\vert y_{n}\vert}{\vert x_{n}\vert}=\infty$, we have $\lim_{n\to\infty}\frac{\vert y_{k_n}\vert}{\vert x_{k_n}\vert}=\infty$, which gives us contradiction. By Proposition \ref{minimumdwa}(i) we get $\on{L}(x_{n},y_{n})=\{(0,1),(0,-1)\}$.
\end{proof}

\begin{theorem}\label{reductionplane}
Assume that  $\sum_{n=1}^{\infty} (x_{n},y_{n})$ is a conditionally convergent series, which has the reduction property. Then $\on{A}(x_{n},y_{n})=\mathbb{R}^2$. 
\end{theorem}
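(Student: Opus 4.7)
The plan is to fix $(p,q)\in\R^2$ and build, by induction, consecutive disjoint finite blocks of indices $F_1<G_1<F_2<G_2<\dots$ (with every index in a later block strictly above every index in an earlier one) whose union $E$ satisfies $\sum_{n\in E}(x_n,y_n)=(p,q)$ in the natural order; the blocks $F_t$ will be \emph{reduction blocks} produced by the reduction property, and the blocks $G_t$ will be \emph{Levy blocks} extracted from the two Levy subsequences. By the preceding theorem the Levy set equals $\{(0,1),(0,-1)\}$, so Proposition \ref{minimumdwa}(iii) supplies disjoint subsequences $(k_n^+)$ and $(k_n^-)$ with $(x_{k_n^\pm},y_{k_n^\pm})=\alpha_n^\pm(0,\pm 1)+w_n^\pm$, positive $\alpha_n^\pm\to 0$, $\sum_n\alpha_n^\pm=\infty$, and $\sum_n\|w_n^\pm\|<\infty$.

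At the start of round $t+1$ let $s_t$ denote the partial sum over the already chosen indices and $(a_t,b_t)=(p,q)-s_t$ the current deficit. The reduction step invokes the reduction property with $\varepsilon_{t+1}=|a_t|+2^{-t-2}$ and target $a_t$ (using $N$ past every prior index, and skipping if $a_t=0$), producing $F_{t+1}$ with $|\sum_{n\in F_{t+1}}x_n-a_t|<\varepsilon_{t+1}/2$ and whose partial $x$- and $y$-sums stay inside $[-\varepsilon_{t+1},\varepsilon_{t+1}]$. Writing $b_t'=b_t-\sum_{n\in F_{t+1}}y_n$, the Levy step then applies Lemma \ref{LemmaApprox} to $(\alpha_n^+)$ (if $b_t'>0$) or to $(\alpha_n^-)$ (if $b_t'<0$), taking only indices so late that the $w$-tail of the chosen sequence is smaller than $2^{-t-2}$ and that all chosen $k_n^\pm$ exceed $\max F_{t+1}$; the resulting $G_{t+1}$ has $y$-sum within $2^{-t-2}$ of $b_t'$ and $x$-side-effect controlled by the $w$-tail. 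A straightforward estimate gives $|a_{t+1}|\le|a_t|/2+2^{-t-1}$ and $|b_{t+1}|\le 2^{-t-1}$, so $s_t\to(p,q)$ geometrically.

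To upgrade convergence of the subsequence $(s_t)$ to convergence of the full natural-order partial sums $S_N=\sum_{n\leq N,\,n\in E}(x_n,y_n)$, I would argue that the partial sums stay in a narrow corridor around $s_t$ throughout each block. For $F_{t+1}$ the corridor is the square $s_t+[-\varepsilon_{t+1},\varepsilon_{t+1}]^2$, of diameter tending to $0$. For $G_{t+1}$ built from $(k_n^+)$ (the other case is symmetric), one has $x_{k_n^+}=(w_n^+)_x$, so the $x$-partial sums never leave the $w$-tail, while the monotone-up-to-$w$ $y$-partial sums lie in $[(s_t)_y-\eta,(s_t)_y+|b_t'|+\eta]$ where $\eta$ is again the $w$-tail; for $t\geq 1$ this corridor has diameter $O(2^{-t})$. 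Given $\delta>0$ I then fix $t_0$ so that both $\|(p,q)-s_t\|<\delta/2$ for $t\ge t_0$ and every corridor from round $t_0$ on has diameter below $\delta/2$; for $N>\max G_{t_0}$ this forces $\|S_N-(p,q)\|<\delta$, so $S_N\to(p,q)$.

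The main obstacle is round $t=0$: the very first Levy block $G_1$ has $y$-sum close to $q$, so inside $G_1$ the values $S_N$ may traverse a long segment before stabilising. This is harmless for the natural-order limit, since only finitely many $N$ sit inside $G_1$, but the estimates must be organised so that the geometric shrinkage of both the deficits and the block corridors takes over uniformly from round $t=1$ onward. A second delicate point is that the only control we have on the $y$-behaviour inside a reduction block is the partial-sum bound built into the reduction property, and this bound is exactly what keeps the interleaving of reduction and Levy blocks compatible — it is also the reason the reduction property was formulated with a bound on $\max_j|\sum_{n=1}^j y_{k_n}|$ rather than on the full block sum alone. Once these technicalities are settled, $(p,q)\in\on{A}(x_n,y_n)$ for every $(p,q)\in\R^2$, which is the desired conclusion.
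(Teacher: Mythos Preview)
Your argument is correct and shares the paper's overall architecture: alternate between \emph{reduction blocks} that steer the $x$-coordinate toward the target and \emph{$y$-correction blocks}, while tracking the running partial sums inside each block so that the natural-order subsum converges, not just the block-by-block subsequence.

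The genuine difference lies in how the $y$-correction is carried out. The paper never invokes the Levy-vector decomposition here; instead it works directly from the ratio hypothesis $|y_n|/|x_n|\to\infty$ built into the reduction property. At stage $p$ it first passes to indices so large that $|y_k|>\tfrac{5}{1-\delta 2^{p+1}}|x_k|$, then picks terms of a single $y$-sign (using only the conditional convergence of $\sum y_n$) to move the $y$-coordinate toward $b$, and bounds the resulting $x$-drift by the reciprocal of that ratio, obtaining $\sum|x_{k_n}|<\tfrac{1}{2^{p+1}}-\delta$. You instead go through Proposition~\ref{minimumdwa}(iii): along the two Levy subsequences for $(0,\pm 1)$ the $x$-coordinates are entirely contained in the absolutely summable perturbation $w_n^\pm$, so the $x$-side-effect of a Levy block is controlled by a tail of $\sum\|w_n^\pm\|$. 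Your route gives cleaner bookkeeping and reuses the Section~\ref{SectionMoreThan2Levy} machinery uniformly; the paper's route is more self-contained (it never needs to \emph{extract} the Levy subsequences) but has to recalibrate the ratio cutoff at every stage. A second, minor difference: the paper opens with a density step $A_0$ bringing the initial deficit below $1$ before the recursion begins, whereas you start at $s_0=0$ and let the geometric contraction of $|a_t|$ absorb the large first rounds; both are legitimate.
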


\begin{proof}
Fix $(a,b)\in\mathbb{R}^2$ and $\varepsilon>0$. We will define inductively a sequence $(A_i)_{i=0}^{\infty}$ of finite sets of indices $A_{i}=\{k_1^{i}<k_2^{i}<\ldots<k_{m_{i}}^{i}\}$ for $i\in\mathbb{N}_0$ such that for every $p\geq 1$
\begin{itemize}
\item[(i)] $\min A_{p+1}>\max A_{p}$;
\item[(ii)] $\vert\sum_{n\in \bigcup_{i=0}^{2p-1}A_{i}} x_{n}-a\vert<\frac{1}{2^p}$ and $\vert\sum_{n\in \bigcup_{i=0}^{2p-1}A_{i}} y_{n}-b\vert<\frac{4}{2^p}$; 
\item[(iii)] $\vert\sum_{n\in \bigcup_{i=0}^{2p}A_{i}} x_{n}-a\vert<\frac{1}{2^p}$ and $\vert\sum_{n\in \bigcup_{i=0}^{2p}A_{i}} y_{n}-b\vert<\frac{1}{2^p}$ (this case holds also for $p=0$);
\item[(iv)] 
$$\max _{1\leq j\leq m_{2p-1}}\vert\sum_{n\in \bigcup_{i=0}^{2p-2}A_{i}} x_{n}+\sum _{n=1}^{j}x_{k_n^{2p-1}}-a\vert<\frac{4}{2^p}\;\;\text{ and }\;\;\max _{1\leq j\leq m_{2p-1}}\vert\sum_{n\in \bigcup_{i=0}^{2p-2}A_{i}} y_{n}+\sum _{n=1}^{j}y_{k_n^{2p-1}}-b\vert<\frac{4}{2^p};$$ 
\item[(v)] 
$$\max _{1\leq j\leq m_{2p}}\vert\sum_{n\in \bigcup_{i=0}^{2p-1}A_{i}} x_{n}+\sum _{n=1}^{j}x_{k_n^{2p}}-a\vert<\frac{1}{2^p}\;\;\text{ and }\;\;\max _{1\leq j\leq m_{2p}}\vert\sum_{n\in \bigcup_{i=0}^{2p-1}A_{i}} y_{n}+\sum _{n=1}^{j}y_{k_n^{2p}}-b\vert<\frac{4}{2^p}.$$ 
\end{itemize}

By Proposition \ref{plaszczyznasumrange} we know that $\overline{\on{A}(x_n,y_n)}=\mathbb{R}^2$, so one can find a finite set of indices $A_{0}$ such that $\max\{\vert\sum_{n\in A_{0}} x_{n}-a\vert,\vert\sum_{n\in A_{0}}y_{n}-b\vert\}<1$, so we get property (iii) for $p=0$. Assume that for some $p\in\mathbb{N}_0$ we have defined sets $A_0,A_1,\ldots,A_{2p}$ satisfying properties (i)-(v).
By the reduction property applied to $x=a-\sum_{n\in \cup_{i=0}^{2p} A_i} x_{n}$, $N=\max A_{2p}+1$ there exists $A_{2p+1}=\{k_1^{2p+1}<k_2^{2p+1}<\ldots<k_{m_{2p+1}}^{2p+1}\}\subset\{N,N+1,\ldots\}$ such that $\vert\sum _{n\in A_{2p+1}}x_n-x\vert<\frac{1}{2^{p+1}}$ and $\max _{1\leq j\leq m_{2p+1}} \vert \sum _{n=1}^{j}x_{k_n^{2p+1}}\vert<\frac{1}{2^p}$ and $\max _{1\leq j\leq m_{2p+1}} \vert \sum _{n=1}^{j}y_{k_n^{2p+1}}\vert<\frac{1}{2^p}$. We have 
$$\vert a-\sum _{n\in \cup_{i=0}^{2p+1} A_i}x_n\vert= \vert\sum _{n\in A_{2p+1}}x_n-x\vert<\frac{1}{2^{p+1}}$$ 
and 
$$\vert b-\sum _{n\in \cup_{i=0}^{2p+1} A_i}y_n\vert=\vert b-\sum _{n\in \cup_{i=0}^{2p} A_i}y_n\vert+\vert\sum _{n\in A_{2p+1}}y_n\vert<\frac{1}{2^p}+\frac{1}{2^p}=\frac{4}{2^{p+1}}.$$ 
Moreover $$\max _{1\leq j\leq m_{2p+1}}\vert\sum_{n\in \cup_{i=0}^{2p} A_i} x_{n}+\sum _{n=1}^{j}x_{k_n^{2p+1}}-a\vert\leq\max _{1\leq j\leq m_{2p+1}}\vert \sum _{n=1}^{j}x_{k_n^{2p+1}}\vert+\vert\sum_{n\in \cup_{i=0}^{2p} A_i} x_{n}-a\vert<\frac{1}{2^p}+\frac{1}{2^p}=\frac{4}{2^{p+1}}.$$ In simillar way we prove $\max _{1\leq j\leq m_{2p+1}}\vert\sum_{n\in \cup_{i=0}^{2p} A_i} y_{n}+\sum _{n=1}^{j}y_{k_n^{2p+1}}-b\vert<\frac{4}{2^{p+1}}$. We have already checked (ii) and (iv).

 Denote $\delta=\vert a-\sum _{n\in \cup_{i=0}^{2p+1} A_i}x_n\vert<\frac{1}{2^{p+1}}$ and let $M=\min\{n>\max A_{2p+1}: \vert y_{k}\vert>\frac{5}{1-\delta 2^{p+1}}\vert x_{k}\vert \ \text{for each} \ k\geq n \}$. Since $\lim _{n\to\infty}\frac{\vert y_n\vert}{\vert x_n\vert}=\infty$, the inequality $\vert y_{k}\vert>\frac{5}{1-\delta 2^{p+1}}\vert x_{k}\vert$ holds for all but finitely many $k$, so $M$ is well defined. Observe that by Proposition \ref{plaszczyznasumrange} both series  $\sum_{n=1}^{\infty} x_{n}$,   $\sum_{n=1}^{\infty} y_{n}$ are conditionally convergent. Hence  $\sum_{n=M}^{\infty} y_{n}$ is conditionally convergent, so one can find $A_{2p+2}=\{k_1^{2p+2}<k_2^{2p+2}<\ldots<k_{m_{2p+2}}^{2p+2}\}$ with $k_{1}^{2p+2}>M$ such that $\vert\sum_{n\in \cup_{i=0}^{2p+2} A_i} y_{n}-b\vert<\frac{1}{2^{p+1}}$  and $y_n$ for $n\in A_{2p+2}$ have the same sign and 
$$\vert\sum_{n\in \cup_{i=0}^{2p+1} A_i} y_{n}+\sum _{n=1}^{j}y_{k_n^{2p+2}}-b\vert<\vert\sum_{n\in \cup_{i=0}^{2p+1} A_i} y_{n}+\sum _{n=1}^{i}y_{k_n^{2p+2}}-b\vert$$ 
for every $i,j\in\{0,1,\ldots,m_{2p+2}\}$, $i<j$. Hence 
$$\max _{1\leq j\leq m_{2p+2}}\vert\sum_{n\in \cup_{i=0}^{2p+1} A_i} y_{n}+\sum _{n=1}^{j}y_{k_n^{2p+2}}-b\vert\leq \vert\sum_{n\in \cup_{i=0}^{2p+1} A_i} y_{n}-b\vert<\frac{4}{2^{p+1}}$$ 
and 
$$\sum _{n=1}^{m_{2p+2}}\vert y_{k_n^{2p+2}}\vert=\vert\sum _{n=1}^{m_{2p+2}} y_{k_n^{2p+2}}\vert\leq\vert \sum_{n\in \cup_{i=0}^{2p+2} A_i} y_{n}-b\vert+\vert b-\sum_{n\in \cup_{i=0}^{2p+1} A_i} y_{n}\vert<\frac{1}{2^{p+1}}+\frac{4}{2^{p+1}}=\frac{5}{2^{p+1}}.$$
Thus 
$$\sum _{n=1}^{m_{2p+2}}\vert x_{k_n^{2p+2}}\vert<\frac{\frac{1}{2^{p+1}}-\delta}{\frac{5}{2^{p+1}}}\sum _{n=1}^{m_{2p+2}}\vert y_{k_n^{2p+2}}\vert<\frac{1}{2^{p+1}}-\delta$$ 
and consequently 
$$\max _{1\leq j\leq m_{2p+2}}\vert\sum_{n\in \cup_{i=0}^{2p+1} A_i} x_{n}+\sum _{n=1}^{j}x_{k_n^{2p+2}}-a\vert<\frac{1}{2^{p+1}}-\delta+\delta=\frac{1}{2^{p+1}}$$ 
which finishes the proof of properties (iii) and (v). From the construction we also obtain (i).
Conditions (ii), (iii) allows us to construct inductively the sequence $(A_i)_{i=0}^{\infty}$ of finite sets with given properties. Denote $\bigcup_{i=1}^{\infty}A_{i}=\{r_1<r_2<\ldots\}$. By (iv) and (v) we get  $\sum _{n=1}^{\infty}(x_{r_n},y_{r_n})=(a,b)$ and by (i) we have $(a,b)\in \on{A}(x_n,y_n)$. Hence $\on{A}(x_n,y_n)=\mathbb{R}^2$.
\end{proof}

The previous theorem helps us to construct a conditionally convergent series on the plane with two Levy vectors, which achievemet set is $\mathbb{R}^2$. A wide class of such series can be constructed as follows. 

\begin{proposition} \label{dwawektoryiplaszczyznalemat}
Let $(x_n),(y_n)$ be nonincreasing seqeuences of positive numbers tending to $0$ such that $\sum_{n=1}^{\infty} x_{n}=\sum_{n=1}^{\infty} y_{n}=\infty$ and $\lim\limits_{n\to\infty}\frac{\vert y_{n}\vert}{\vert x_{n}\vert}=\infty$.
Let $v_{4n-3}=(-x_{2n-1},-y_{2n-1}), v_{4n-2}=(-x_{2n-1},y_{2n-1}),v_{4n-1}=(x_{2n},y_{2n}), v_{4n}=(x_{2n},-y_{2n})$  for every $n\in\mathbb{N}$. Then $\on{A}(v_n)=\mathbb{R}^2$.
\end{proposition}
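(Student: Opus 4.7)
The plan is to reduce to Theorem \ref{reductionplane}. Since that theorem requires (a) conditional convergence of $\sum v_n$ and (b) the reduction property, I will verify these two hypotheses in turn.

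First, conditional convergence. Grouping in blocks of four, $v_{4n-3}+v_{4n-2}+v_{4n-1}+v_{4n}=(-2(x_{2n-1}-x_{2n}),0)$, and by telescoping against the differences $x_{2k}-x_{2k+1}\ge 0$ one has $\sum_{n=1}^{\infty}(x_{2n-1}-x_{2n})\le x_1<\infty$, so the block sums form an absolutely convergent series. The remaining oscillation within a block is bounded by $\|v_{4n-3}\|+\|v_{4n-2}\|+\|v_{4n-1}\|\to 0$, so $\sum v_n$ converges. Absolute convergence fails because $\|v_n\|\ge y_{\lceil n/2\rceil}$ and $\sum_m y_m=\infty$.

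The first clause of the reduction property is immediate from $|v_n^{(1)}|=x_{\lceil n/2\rceil}$, $|v_n^{(2)}|=y_{\lceil n/2\rceil}$ and the hypothesis $y_m/x_m\to\infty$. The heart of the matter is the approximation clause: given $\varepsilon>0$, $a\in(-\varepsilon,\varepsilon)$ and $N$, I must exhibit a finite set of indices $A$ with $\min A\ge N$ whose first-coordinate sum lies within $\varepsilon/2$ of $a$, while both coordinate partial sums stay strictly below $\varepsilon$ in absolute value. The structural observation that makes everything work is
\[
v_{4n-1}+v_{4n}=(2x_{2n},0),\qquad v_{4n-3}+v_{4n-2}=(-2x_{2n-1},0),
\]
so adding whole pairs moves us only along the horizontal axis. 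I treat the case $a\ge 0$ by picking pairs of the first type and the case $a<0$ symmetrically with pairs of the second type. Since $(x_n)$ is nonincreasing with divergent sum, the subsequence $(2x_{2n})$ tends to zero and still has divergent sum, so Lemma \ref{LemmaApprox} produces a finite set $F=\{n_1<\ldots<n_s\}$ of indices beyond a chosen threshold $N'$ satisfying $a-\varepsilon/2<\sum_{n\in F}2x_{2n}<a$.

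The main step that must be checked with care is the partial-sum bound. Setting $A=\bigcup_{j=1}^s\{4n_j-1,4n_j\}$ in increasing order, after a complete pair the running sum lies in $[0,a)\times\{0\}$, which is inside the open $\varepsilon$-square; after a half pair the first coordinate picks up a term of size $x_{2n_j}$ and the second a term of size $y_{2n_j}$. Thus the threshold $N'$ must be chosen so that $4N'-3\ge N$ and both $x_{2n}<\varepsilon-|a|$ and $y_{2n}<\varepsilon$ hold for $n\ge N'$; all three conditions are feasible because $x_n,y_n\to 0$ and $|a|<\varepsilon$. With this choice both partial-sum maxima remain strictly below $\varepsilon$, verifying the reduction property, and Theorem \ref{reductionplane} then yields $\on{A}(v_n)=\R^2$.
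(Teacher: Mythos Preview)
Your argument is correct and follows essentially the same route as the paper: verify the reduction property using the cancelling pairs $v_{4n-1}+v_{4n}=(2x_{2n},0)$ (respectively $v_{4n-3}+v_{4n-2}=(-2x_{2n-1},0)$) and then invoke Theorem~\ref{reductionplane}. Two small remarks: your explicit check of conditional convergence is a welcome addition the paper omits; and the extra threshold condition $x_{2n}<\varepsilon-|a|$ is unnecessary, since every first-coordinate partial sum (half-pair or full-pair) is already bounded by the total $\sum_{n\in F}2x_{2n}<a<\varepsilon$ --- the only genuine constraint is $y_{2n}<\varepsilon$ (and the edge case $a=0$, where Lemma~\ref{LemmaApprox} does not literally apply, is handled by a single small pair).
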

\begin{proof}
 Let $\varepsilon>0$, $x\in(0,\varepsilon)$, $N\in\mathbb{N}$. Let $k=\min\{n: y_{n}<\varepsilon\}$ and $p=\max\{k,N\}$. Since $\sum_{n=p}^{\infty} x_{n}=\infty$ one can find  $A=\{k_1<k_2<\ldots<k_m\}$ which is a subset of $(4\mathbb{N}-1 \cup  4\mathbb{N})\cap [p,\infty)$ with the property $4n-1\in A$ if and only if $4n\in A$  for every $n\in\mathbb{N}$ and such that $ 0<x-\sum_{n\in A}x_{n}<\frac{\varepsilon}{2}$.
Recall that $\max_{1\leq j\leq m} \vert \sum_{n=1}^{j}x_{k_n}\vert=\sum_{n\in A}x_{n}<x<\varepsilon$. Moreover for each $r\in\mathbb{N}$ if $2r\leq m$ then $\sum_{n=1}^{2r}y_{k_n}=0$ and if $2r+1\leq m$ then  $\sum_{n=1}^{2r+1}y_{k_n}=y_{2r+1}$. Hence  $\max_{1\leq j\leq m} \vert \sum_{n=1}^{j}y_{k_n}\vert<\varepsilon$.
 If $x\in(-\varepsilon,0)$ then we take $A$ which is a subset of $(4\mathbb{N}-3 \cup  4\mathbb{N}-2)\cap [p,\infty)$  with the property $4n-3\in A$ if and only if $4n-2\in A$  for every $n\in\mathbb{N}$ and such that $ 0<\sum_{n\in A}x_{n}-x<\frac{\varepsilon}{2}$ and get the same results. Hence $\sum_{n=1}^{\infty} v_{n}$ has the reduction property and consequently by Theorem \ref{reductionplane} we get $A(v_n)=\mathbb{R}^2$.
\end{proof}

\begin{example} \label{dwawektoryiplaszczyzna}
\emph{
Let $v_{2n-1}=(x_{2n-1},y_{2n-1})=(\frac{(-1)^{n}}{n},\frac{(-1)^{n}}{\sqrt{n}})$, $v_{2n}=(x_{2n},y_{2n})=(\frac{(-1)^{n}}{n},\frac{(-1)^{n+1}}{\sqrt{n}})$ for every $n\in\mathbb{N}$. Thus $\on{L}(v_n)=\{(0,1),(0,-1)\}$. By Proposition \ref{dwawektoryiplaszczyznalemat} we have $\on{A}(v_{n})=\mathbb{R}^{2}$.}
\end{example}

In the previous section we have given sufficient condition for a conditionally convergent series $\sum_{n=1}^\infty v_n$ to have $\on{A}_{\on{abs}}(v_n)=\R^2$, namely it needs to have three or more Levy vectors. Since $\on{A}_{\on{abs}}(v_n)\subset \on{A}(v_n)$, we also had $\on{A}(v_n)=\R^2$. Now we consider series with two Levy vectors and show that $\on{A}_{\on{abs}}(v_n)$ can be a strict subset of the achievement set.

\begin{proposition}
Let $\sum_{n=1}^\infty y_n$ be a conditionally convergent series and $\sum_{n=1}^\infty x_n$ be an absolutely convergent series of positive terms. Define $v_n=(x_n,y_n)$, then $\on{L}(v_n)=\{(0,1),(0,-1)\}$ and $\on{A}_{\on{abs}}(v_n)\neq \on{A}(v_n)$.
\end{proposition}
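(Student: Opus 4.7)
The plan is to verify the claim about Levy vectors first, and then exhibit a specific point that lies in $\on{A}(v_n)\setminus\on{A}_{\on{abs}}(v_n)$.

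For the Levy vectors, I would first check that $(0,\pm 1)\in \on{L}(v_n)$ using Proposition \ref{minimumdwa}(iii). Since $\sum y_n$ is conditionally convergent, both its positive part and negative part diverge, while $y_n\to 0$. Passing to the subsequence $(k_n)$ of indices where $y_{k_n}>0$, I would write $v_{k_n}=y_{k_n}(0,1)+(x_{k_n},0)$; the coefficients $\alpha_n:=y_{k_n}$ are positive, tend to $0$, sum to $\infty$, and the ``error'' terms $w_n:=(x_{k_n},0)$ satisfy $\sum\|w_n\|\leq\sum x_n<\infty$. The analogous argument using negative $y_n$ gives $(0,-1)\in \on{L}(v_n)$. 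Conversely, if $(a,b)$ were a Levy vector with $a\neq 0$, then by the same proposition there would be a subsequence with $v_{k_n}=\alpha_n(a,b)+w_n$, and comparing first coordinates would yield $\sum x_{k_n}=a\sum\alpha_n+\sum(w_n)_1=\pm\infty$, contradicting the absolute convergence of $\sum x_n$.

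For the inequality of achievement sets I would show that the total sum $p:=(\sum_{n=1}^\infty x_n,\sum_{n=1}^\infty y_n)$ lies in $\on{A}(v_n)\setminus\on{A}_{\on{abs}}(v_n)$. Clearly $p\in\on{A}(v_n)$ by choosing $\varepsilon_n=1$ for every $n$. Suppose for contradiction that $p=\sum_{n=1}^\infty\varepsilon_n v_n$ for some $(\varepsilon_n)\in\{0,1\}^\N$ with $\sum_{n=1}^\infty\varepsilon_n\|v_n\|<\infty$. Since $\sum_{n=1}^\infty v_n$ also converges, the difference satisfies $\sum_{n=1}^\infty(1-\varepsilon_n)v_n=0$. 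Looking at the first coordinate gives $\sum_{n=1}^\infty(1-\varepsilon_n)x_n=0$; since every $x_n>0$ and every coefficient $1-\varepsilon_n$ is nonnegative, this forces $\varepsilon_n=1$ for all $n$. But then $\sum_{n=1}^\infty\varepsilon_n\|v_n\|=\sum_{n=1}^\infty\|v_n\|\geq\sum_{n=1}^\infty|y_n|=\infty$, contradicting the assumed absolute summability.

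There is no real obstacle here: the Levy-vector part is a direct application of the subsequence characterization in Proposition \ref{minimumdwa}(iii), and the separation of the two achievement sets rests on the trivial but crucial observation that positivity of all $x_n$ rigidly fixes the choice $\varepsilon_n\equiv 1$ whenever one wants to reproduce $\sum x_n$ as a subsum of positive terms.
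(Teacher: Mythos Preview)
Your proposal is correct and follows essentially the same approach as the paper: both arguments rule out Levy vectors with nonzero first coordinate via the absolute convergence of $\sum x_n$, and both exhibit the full sum $\sum v_n$ as a point of $\on{A}(v_n)\setminus\on{A}_{\on{abs}}(v_n)$ by noting that positivity of the $x_n$ forces $\varepsilon_n\equiv 1$. You give more explicit detail on the inclusion $(0,\pm 1)\in\on{L}(v_n)$, which the paper simply calls obvious, but otherwise the proofs coincide.
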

\begin{proof}
Obviously $\{(0,1),(0,-1)\}\subset \on{L}(v_n)$. Let $\Vert v\Vert=1$ and $(0,-1)\neq v\neq (0,1)$. Suppose that $v$ is a Levy vector for a series $\sum_{n=1}^\infty v_n$. Hence the series $\sum_{n=1}^\infty \vert x_n\vert$ diverges as a sequence of projections into horizontal line of $(v_n)$, which contradicts with absolute convergence of  $\sum_{n=1}^\infty x_n$. 
\\Let $\sum_{n=1}^\infty v_n=(x,y)$. Then $(x,y)\in \on{A}(v_n)$ as a limit of the series for which it is enough to take $\varepsilon_n=1$ for each $n\in\mathbb{N}$. On the other hand if $x=\sum_{n=1}^\infty\varepsilon_n x_n$ then  $\varepsilon_n=1$ for each $n\in\mathbb{N}$ is a unique representation, because all of terms of $(x_n)$ are positive. Hence $\sum_{n=1}^\infty\varepsilon_n \vert y_n\vert=\sum_{n=1}^\infty \vert y_n\vert=\infty$, so $(x,y)\notin \on{A}_{\on{abs}}(v_n)$.
\end{proof}
Below we give a simply example of a series being a mix of absolutely and conditionally convergent series.
\begin{example}\emph{
Let $v_n=(\frac{1}{2^n},\frac{(-1)^n}{n})$ for $n\in\mathbb{N}$. Then $(1,-\ln2)\in \on{A}(v_n)\setminus\on{A}_{\on{abs}}(v_n)$.}
\end{example}
It shows that it may happen that $\on{A}_{\on{abs}}(v_n)\neq\on{A}(v_n)$. For a given example it easy to see that $\on{A}(v_n)\subset [0,1]\times\mathbb{R}$. If a series $\sum_{n=1}^\infty v_n$ has at least three Levy vectors, then by Theorem \ref{3LevyVectorsAndABS}, $\on{A}(v_n)=\on{A}_{\on{abs}}(v_n)=\R^2$. The next example shows that $\on{A}(v_n)=\R^2$ does not imply that $\on{A}_{\on{abs}}(v_n)=\R^2$ -- clearly by Theorem \ref{3LevyVectorsAndABS} the constructed example has to have exactly two Levy vectors.

\begin{example}\emph{
Let $n_0=0$, $n_k=4^{k^2}+n_{k-1}$ for $k\in\mathbb{N}$. Let $x_{n}=\frac{1}{4^{k^2}}$, $y_{n}=\frac{1}{2^{k}}$  for $n\in (n_{k-1},n_{k}]$. We construct $(v_n)=(v_n^x,v_n^y)$ in the same way as in Proposition \ref{dwawektoryiplaszczyznalemat}. Hence $\on{A}(v_n)=\mathbb{R}^2$. We will show that $\on{A}_{\on{abs}}(v_n)\neq\R^2$, more precisely $\on{A}_{\on{abs}}(v_n)\cap\{\frac{1}{3}\}\times\R=\emptyset$. 
}

\emph{
Suppose to the contrary that there exists $A\subset\mathbb{N}$ such that $\sum_{n\in A} v_n^x=\frac{1}{3}$ and $\sum_{n\in A}\vert v_n^y\vert<\infty$. Then there exists $k\in\mathbb{N}$ such that for each $m\geq k$ set $A$ consists of less then $2^m$ elements $v_n$ for which $\vert v_n^x\vert=\frac{1}{4^{m^2}}$ and $\vert v_n^y\vert=\frac{1}{2^m}$. Hence 
$$\sum_{n\in A, \vert v_n^x\vert\leq\frac{1}{4^{m^2}}} \vert v_n^x\vert\leq\sum_{m=k}^{\infty}\frac{2^m}{4^{m^2}}.$$ 
We will prove inductively that $\sum_{m=n+1}^{\infty}\frac{2^m}{4^{m^2}}<\frac{1}{8\cdot 4^{n^2}}$ for $n\in\mathbb{N}$. Note that $\sum_{m=1}^{\infty}\frac{2^m}{4^{m^2}}<\sum_{m=1}^{\infty}\frac{2^m}{4^{m}}=1$. Assume that $\sum_{m=n}^{\infty}\frac{2^m}{4^{m^2}}<\frac{1}{4^{(n-1)^2}}$ for some $n\in\mathbb{N}$. We will show that $\sum_{m=n+1}^{\infty}\frac{2^m}{4^{m^2}}<\frac{1}{8\cdot 4^{n^2}}<\frac{1}{4^{n^2}}$. We have
$$\sum\limits_{m=n+1}^{\infty}\frac{2^m}{4^{m^2}}=\sum\limits_{m=n}^{\infty}\frac{2^{m+1}}{4^{(m+1)^2}}=\sum\limits_{m=n}^{\infty}\frac{2^{m+1}}{4^{m^2}\cdot 4^{(m+1)^2-m^2}}<\frac{2} {4^{(n+1)^2-n^2}}\sum\limits_{m=n}^{\infty}\frac{2^{m}}{4^{m^2}}.$$
Thus by the inductive assumption we obtain
$$\sum\limits_{m=n+1}^{\infty}\frac{2^m}{4^{m^2}}<\frac{2} {4^{(n+1)^2-n^2+(n-1)^2}}=\frac{2} {4^{n^2+2}}=\frac{1}{8\cdot 4^{n^2}}.$$
In particular we have 
$$\sum\limits_{n\in A, \vert v_n^x\vert\leq\frac{1}{4^{(k+1)^2}}} \vert v_n^x\vert<\frac{1}{8\cdot 4^{k^2}}.$$ 
Since $\sum_{n\in A} v_n^x=\frac{1}{3}$, then $\vert \sum_{n\in A, \vert v_n^x\vert\geq\frac{1}{4^{k^2}}}v_n^x -\frac{1}{3} \vert<\frac{1}{8\cdot 4^{k^2}}$. Note that $\sum_{n\in A, \vert v_n^x\vert\geq\frac{1}{4^{k^2}}}v_n^x=\frac{p}{4^{k^2}}$ for some $p\in \mathbb{Z}$. We have 
$$\sum\limits_{n=1}^{k^2}\frac{4^{n^2-n}}{4^{n^2}}=\sum\limits_{n=1}^{k^2}\frac{1}{4^{n}}=\frac{1-\frac{1}{4^{k^2}}}{3}=\frac{1}{3}-\frac{1}{3\cdot 4^{k^2}}<\frac{1}{3}<\frac{1}{3}+\frac{2}{3\cdot 4^{k^2}}.$$
Thus $\min\{\vert \frac{a}{4^{k^2}}-\frac{1}{3}\vert : a\in\mathbb{Z}\}=\frac{1}{3\cdot 4^{m^2}}$. Hence $\frac{1}{3\cdot 4^{k^2}}\leq\vert\frac{p}{4^{k^2}}-\frac{1}{3}\vert<\frac{1}{8\cdot 4^{k^2}}$ which yields a contradiction.}
\end{example}

\section{Extreme example and open problems}\label{SectionExample}

As we have mentioned in the Introduction this section is devoted to the construction of a series in the plane such each  vertical section of its achievement set has at most one element.

We define positive real numbers $x_1\geq x_2\geq\dots$, natural numbers $0=N_0<N_1<N_2,\dots$ inductively as follows. 
First we define $x_1=1$ and $N_1=1$. Define $\delta_1=1$. Let $n$ be such that $\frac{2^n\delta_1}{4^{1+3}}=1$. Put $N_{2}=N_1+2^n$ and $x_{N_1+i}=\frac{\delta_1}{2\cdot 4^{2}}+\frac{1}{2^{2^n+i}}$ for $i=1,\dots,2^n$. Put $\delta_2:=\min\{\vert\sum_{i=N_{1}+1}^{N_{2}}\xi_ix_i\vert:\xi_i\in\{-1,0,1\}$ and $\xi_i\neq 0$ for some $i\}=\frac{1}{2^{2\cdot 2^n}}=\frac{1}{4^{2^n}}$. Suppose that we have already defined $N_1<N_2<\dots<N_k$ and $x_i$ for $i\leq N_k$ such that (1)--(3) hold. Let $n$ be such that $\frac{2^n\delta_k}{4^{k+3}}=1$. Put $N_{k+1}=N_k+2^n$ and $x_{N_k+i}=\frac{\delta_k}{2\cdot 4^{k+1}}+\frac{1}{2^{2^n+i}}$ for $i=1,\dots,2^n$. Put $\delta_{k+1}:=\min\{\vert\sum_{i=N_{k}+1}^{N_{k+1}}\xi_ix_i\vert:\xi_i\in\{-1,0,1\}$ and $\xi_i\neq 0$ for some $i\}=\frac{1}{2^{2\cdot 2^n}}=\frac{1}{4^{2^n}}$. Note that $\delta_{k+1}<\frac{\delta_k}{\cdot 4^{k+2}}$, $\sum_{i=N_{k-1}+1}^{N_{k}}x_i\geq 1$ and  $x_i<\frac{\delta_k}{4^{k+1}}$ for $i=N_{k}+1,\dots,N_{k+1}$.

\begin{lemma}\label{KeyLemmaExample}
Let $(\varepsilon_i),(\varepsilon_i')\in\{0,1\}^\N$ be distinct and such that $\sum_{i=1}^\infty(-1)^i\varepsilon_ix_i=\sum_{i=1}^\infty(-1)^i\varepsilon_i'x_i$. Then
\[
\Big\vert\sum_{i=N_k+1}^{N_{k+1}}(-1)^i\varepsilon_ix_i-\sum_{i=N_k+1}^{N_{k+1}}(-1)^i\varepsilon_i'x_i\Big\vert=\Big\vert\sum_{i\in(N_k,N_{k+1}]\cap 2\N, \varepsilon_i\neq\varepsilon_i'}x_i-\sum_{i\in(N_k,N_{k+1}]\cap (2\N-1), \varepsilon_i\neq\varepsilon_i'}x_i\Big\vert\geq\frac{\delta_k}{2}
\]
for infinitely many $k$. Moreover, for infinitely many $k$ 
\[
\Big\vert\vert\{i\in(N_k,N_{k+1}]\cap 2\N: \varepsilon_i\neq\varepsilon_i'\}\vert-\vert\{i\in(N_k,N_{k+1}]\cap (2\N-1): \varepsilon_i\neq\varepsilon_i'\}\vert\Big\vert>4^k.
\]
\end{lemma}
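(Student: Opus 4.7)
My plan is to set $\eta_i := \varepsilon_i - \varepsilon_i' \in \{-1,0,1\}$, so the hypothesis reduces to $\sum_i(-1)^i\eta_ix_i = 0$ with $\eta\not\equiv 0$. I will work with the block sums $s_k := \sum_{i \in (N_{k-1},N_k]}(-1)^i\eta_ix_i$ (so $\sum_k s_k = 0$) and decompose each $x_i$ in block $k \ge 2$ as $x_i = a_k + 2^{-(2^{n_{k-1}}+j)}$, where $a_k := \delta_{k-1}/(2\cdot 4^k) = 2^{3-n_{k-1}}$ is the common bulk and $i = N_{k-1}+j$. This yields $s_k = a_kT_k + F_k$ with $T_k := \sum_i(-1)^i\eta_i \in \Z$ and $|F_k| < 2^{-2^{n_{k-1}}}$; the identities $a_k\cdot 4^k = \delta_{k-1}/2$ and $\delta_k = 2^{-2\cdot 2^{n_{k-1}}}$ show that $|s_{k+1}| \ge \delta_k/2$ is, up to the ultra-tiny residue, equivalent to $|T_{k+1}| \ge 4^{k+1}$.

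The central structural fact I would establish next is a \emph{dyadic quantization}: every partial sum $P_K := \sum_{k\le K}s_k$ is a dyadic rational representable with denominator $2^{2\cdot 2^{n_{K-1}}}$ (the lcm of the denominators of the $a_kT_k$ and $F_k$ terms for $k\le K$), so $|P_K| \in \{0\}\cup[\delta_K,\infty)$. I would first use this to rule out $\eta$ having finite support: if $K_1$ were the largest block on which $\eta \not\equiv 0$, the equation $s_{K_1} = -P_{K_1-1}$ would force the fine residue $F_{K_1}$ (naturally at denominator $2^{2\cdot 2^{n_{K_1-1}}}$) to reduce to the coarser denominator $2^{2\cdot 2^{n_{K_1-2}}}$; the tower-type separation $n_{K_1-1} \approx 2^{n_{K_1-2}+1}$ forces $F_{K_1}$'s integer numerator to be divisible by $2^{2\cdot 2^{n_{K_1-1}}-2\cdot 2^{n_{K_1-2}}}$, an exponent vastly exceeding its size bound $2^{2^{n_{K_1-1}}}$, so $F_{K_1} = 0$ and $\eta \equiv 0$ on block~$K_1$, a contradiction.

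For the inequality $|s_{k+1}| \ge \delta_k/2$ I would then argue by contradiction: assume $|T_k| \le 4^k$ for all $k \ge M^*$. Then $|s_k| \le \delta_{k-1}/2 + 2^{-2^{n_{k-1}}}$, and the super-geometric bound $\delta_{k+1} < \delta_k/4^{k+2}$ gives $\sum_{k>K}|s_k| < 0.51\,\delta_K + O(2^{-2^{n_K}}) < \delta_K$ for every $K \ge M^* - 1$. The dyadic quantization then forces $P_K = 0$ for all such $K$, so $s_K = P_K - P_{K-1} = 0$ for $K \ge M^*$, reducing $\eta$ to finite support (the excluded case) and yielding the desired contradiction, whence $|T_{k+1}|\ge 4^{k+1}+1$ strictly for infinitely many $k$. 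For the ``parity-imbalance'' claim I would run a parallel argument on the auxiliary block sums $\tilde s_k := \sum_{i \in D_k}(-1)^ix_i = a_kU_k + G_k$ with $U_k := |E_{k}| - |O_{k}|$, exploiting $\sum_k \tilde s_k = 2S_+$ (where $S_+ := \sum_{D_+}(-1)^ix_i$ equals $S_-$ by the original equation) and the same tail/dyadic machinery to show that $|U_{k+1}| > 4^k$ for infinitely many~$k$. The main obstacle will be the tail-vs-dyadic-spacing accounting: the tail under the contradiction hypothesis must lie \emph{strictly} below $\delta_K$, which relies on the tower growth ensuring $2^{-2^{n_K}} \ll \delta_K$; a secondary subtlety is the coarse-vs-fine denominator mismatch in the finite-support exclusion.
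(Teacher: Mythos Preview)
Your argument for the main inequality is sound and uses the same ingredients as the paper (the block decomposition $s_k=a_kT_k+F_k$, dyadic lower bounds, the super-geometric tail estimate), though organized differently. The paper first isolates a short Claim---any block on which $\eta\not\equiv 0$ contributes a difference of at least $\delta_{k+1}$---and then, under the contrary assumption, picks a single index $k$ at which the partial sum $\sum_{i\le N_{k+1}}(-1)^i\eta_ix_i$ is at least $\delta_{k+1}$ and confronts this directly with the tail bound $\sum_{l>k}\tfrac{\delta_l}{2}<\tfrac34\delta_{k+1}$. You instead pass through $P_K=0$ for all large $K$, then $s_K=0$, then a separate finite-support exclusion via the coarse/fine denominator mismatch. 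Both routes reach the goal; yours is longer, and the finite-support step you single out is not needed in the paper's arrangement.

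For the ``moreover'' part there is a genuine gap in your plan. The parallel scheme rests on $\sum_k\tilde s_k=2S_+$ together with ``the same tail/dyadic machinery'', but that machinery worked only because the limit was $0$: from $|P_K|<\delta_K$ and $P_K\in\delta_K\Z$ you concluded $P_K=0$, hence $s_K=0$. In the auxiliary version you would only obtain $|\tilde P_K-2S_+|<\delta_K$ with $\tilde P_K\in\delta_K\Z$; since $2S_+$ is an arbitrary real number---neither $0$ nor a multiple of any $\delta_K$ in general---this does not pin down $\tilde P_K$, does not force $\tilde s_K=0$, and the argument stalls. (There is also the preliminary issue that $S_+=\sum_{D_+}(-1)^ix_i$ is a subseries of a conditionally convergent series and need not converge at all; under your contradiction hypothesis $|U_k|\le 4^k$ it happens to converge absolutely, but that only sharpens the point that the limit is some unknown number rather than $0$.) The paper does \emph{not} run a second tail/quantization argument here: it derives the moreover clause in one line from the already-established inequality together with the pointwise bound $x_i<\delta_k/4^{k+1}$, using that on the block $(N_k,N_{k+1}]$ every $x_i$ equals the common value $a_{k+1}=\delta_k/(2\cdot4^{k+1})$ up to a negligible residue, so that---via the middle expression in the display---a block difference of size $\ge\delta_k/2$ forces the integer imbalance to be at least roughly $(\delta_k/2)/a_{k+1}=4^{k+1}>4^k$. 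That is a direct size estimate, not a repetition of the inductive machinery.
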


\begin{proof} We start from proving the following.\\
\textbf{Claim. }\textit{Let $j,k\in\N$ be such that $\varepsilon_j\neq\varepsilon_j'$ and $N_k<j\leq N_{k+1}$. Then 
\[
\Big\vert\sum_{i=N_k+1}^{N_{k+1}}(-1)^i\varepsilon_ix_i-\sum_{i=N_k+1}^{N_{k+1}}(-1)^i\varepsilon_i'x_i\Big\vert\geq\delta_{k+1}.
\]
}

We have 
\[
\sum_{i\in(N_k,N_{k+1}]\cap 2\N}\varepsilon_ix_i=u\frac{\delta_k}{2\cdot 4^{k+1}}+\sum_{i\in(N_k,N_{k+1}]\cap 2\N}\varepsilon_i2^{-2^{N_{k+1}-N_k}-i}
\]
where $u$ stands for $\big\vert\{i\in(N_k,N_{k+1}]\cap 2\N:\varepsilon_i=1\}\big\vert$. Similarly 
\[
\sum_{i\in(N_k,N_{k+1}]\cap 2\N}\varepsilon_i'x_i=u'\frac{\delta_k}{2\cdot 4^{k+1}}+\sum_{i\in(N_k,N_{k+1}]\cap 2\N}\varepsilon_i'2^{-2^{N_{k+1}-N_k}-i}
\]
\[
\sum_{i\in(N_k,N_{k+1}]\cap 2\N-1}\varepsilon_ix_i=v\frac{\delta_k}{2\cdot 4^{k+1}}+\sum_{i\in(N_k,N_{k+1}]\cap 2\N-1}\varepsilon_i2^{-2^{N_{k+1}-N_k}-i}
\]
\[
\sum_{i\in(N_k,N_{k+1}]\cap 2\N-1}\varepsilon_i'x_i=v'\frac{\delta_k}{2\cdot 4^{k+1}}+\sum_{i\in(N_k,N_{k+1}]\cap 2\N-1}\varepsilon_i'2^{-2^{N_{k+1}-N_k}-i}
\]
where $u=\big\vert\{i\in(N_k,N_{k+1}]\cap 2\N:\varepsilon_i'=1\}\big\vert$, $w=\big\vert\{i\in(N_k,N_{k+1}]\cap 2\N-1:\varepsilon_i=1\}\big\vert$ and $w'=\big\vert\{i\in(N_k,N_{k+1}]\cap 2\N-1:\varepsilon'_i=1\}\big\vert$. Let $\xi_i=(-1)^i(\varepsilon_i-\varepsilon_i')\in\{-1,0,1\}$. Note that $\xi_{j}\neq 0$. Thus  
\[
\Big\vert\sum_{i=N_k+1}^{N_{k+1}}(-1)^i\varepsilon_ix_i-\sum_{i=N_k+1}^{N_{k+1}}(-1)^i\varepsilon_i'x_i\Big\vert=\Big\vert 
\frac{\vert u-u'\vert\delta_k}{2\cdot 4^{k+1}}-\frac{\vert v-v'\vert\delta_k}{2\cdot 4^{k+1}}+\sum_{i=N_k+1}^{N_{k+1}}\xi_i2^{-2^{N_{k+1}-N_k}-i}\Big\vert=\Big\vert t\frac{\delta_k}{2\cdot 4^{k+1}}+R_k\Big\vert
\]
where $t$ is an integer and $0<\delta_{k+1}=\frac{1}{4^{N_{k+1}-N_k}}\leq R_k<\frac{1}{2^{N_{k+1}-N_k}}$. If $t=0$, then
\[
\Big\vert t\frac{\delta_k}{2\cdot 4^{k+1}}+R_k\Big\vert=\vert R_k\vert\geq\delta_{k+1}.
\]
If $t\neq 0$, then
\[
\Big\vert t\frac{\delta_k}{2\cdot 4^{k+1}}+R_k\Big\vert\geq\Big\vert \frac{\delta_k}{2\cdot 4^{k+1}}-\frac{1}{2^{N_{k+1}-N_k}}\Big\vert\geq\delta_{k+1}.
\]
This finishes the proof of the Claim. \vspace{0.5 cm}

Suppose on the contrary that for all but finitely many $k$ we have
\begin{equation}\label{eq1}
\Big\vert\sum_{i=N_l+1}^{N_{l+1}}(-1)^i\varepsilon_ix_i-\sum_{i=N_l+1}^{N_{l+1}}(-1)^i\varepsilon_i'x_i\Big\vert<\frac{\delta_l}{2}.
\end{equation}
Note that there is $k$ such that
\begin{equation}\label{eq2}
\Big\vert\sum_{i=N_l+1}^{N_{l+1}}(-1)^i\varepsilon_ix_i-\sum_{i=N_l+1}^{N_{l+1}}(-1)^i\varepsilon_i'x_i\vert\geq\delta_{k+1} \text{ and }\vert\sum_{i=N_l+1}^{N_{l+1}}(-1)^i\varepsilon_ix_i-\sum_{i=N_l+1}^{N_{l+1}}(-1)^i\varepsilon_i'x_i\Big\vert<\frac{\delta_l}{2}
\end{equation}
for all $l\geq k$. Indeed, if \eqref{eq1} holds for every $l$, then by the assumption there is $j$ with $\varepsilon_j\neq\varepsilon_j'$. Let $k$ be such that $N_k<j\leq N_{k+1}$. Then by the Claim we obtain \eqref{eq2}. If \eqref{eq1} holds for all but finitely many $l$, then find $k$ such that \eqref{eq1} holds for every $l\geq k$ and 
\[
\Big\vert\sum_{i=N_k+1}^{N_{k+1}}(-1)^i\varepsilon_ix_i-\sum_{i=N_k+1}^{N_{k+1}}(-1)^i\varepsilon_i'x_i\Big\vert\geq\frac{\delta_k}{2}>\delta_{k+1}.
\]
Then
\[
\delta_{k+1}\leq \Big\vert\sum_{i=1}^{N_{k+1}}(-1)^i\varepsilon_ix_i-\sum_{i=1}^{N_{k+1}}(-1)^i\varepsilon_i'x_i\Big\vert\leq
\Big\vert \sum_{i=1}^\infty(-1)^i\varepsilon_ix_i-\sum_{i=1}^\infty(-1)^i\varepsilon_i'x_i\Big\vert +
\]
\[
\sum_{l=k+1}^\infty\Big\vert\sum_{i=N_l+1}^{N_{l+1}}(-1)^i\varepsilon_ix_i-\sum_{i=N_l+1}^{N_{l+1}}(-1)^i\varepsilon_i'x_i\Big\vert\leq \frac{\delta_{k+1}}{2}+\frac{\delta_{k+2}}{2}+\frac{\delta_{k+3}}{2}+\dots<\frac{3\delta_{k+1}}{4}.
\]
A contradiction.

The moreover part of the assertion follows from the fact that $x_i<\frac{\delta_k}{4^{k+1}}$ for $i\in(N_k,N_{k+1}]$.
\end{proof}

Let $y_i=\frac{1}{2^k}$ for $i\in(N_k,N_{k+1}]$. Consider a series $\sum_{i=1}^\infty((-1)^ix_i,(-1)^iy_i)$. 
\begin{lemma}\label{SecondLemmaExample}
Let $(\varepsilon_i)\in\{0,1\}^\N$ be such that $\sum_{i=1}^\infty\varepsilon_i((-1)^ix_i,(-1)^iy_i)$ is convergent. Then for every $(\varepsilon_i')\in\{0,1\}^\N$ such that $(\varepsilon_i')\neq (\varepsilon_i)$ and $\sum_{i=1}^\infty\varepsilon_i(-1)^ix_i=\sum_{i=1}^\infty\varepsilon_i'(-1)^ix_i$ a series $\sum_{i=1}^\infty\varepsilon_i'((-1)^ix_i,(-1)^iy_i)$ is divergent.
\end{lemma}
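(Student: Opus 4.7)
My proof is by contradiction. I would assume $\sum_{i=1}^\infty\varepsilon_i'((-1)^ix_i,(-1)^iy_i)$ converges. Combined with the hypothesis that $\sum_{i=1}^\infty\varepsilon_i((-1)^ix_i,(-1)^iy_i)$ converges, subtracting the $y$-components yields the convergence of $\sum_{i=1}^\infty(\varepsilon_i-\varepsilon_i')(-1)^iy_i$. Since $y_i\equiv 2^{-k}$ is constant on the $k$-th block $(N_k,N_{k+1}]$, evaluating the partial sums at the break-points $N_k$ and using that their consecutive differences must tend to zero shows that the block increments
\[
B^y_k:=\frac{T_k}{2^k},\qquad T_k:=\sum_{i=N_k+1}^{N_{k+1}}(\varepsilon_i-\varepsilon_i')(-1)^i,
\]
satisfy $B^y_k\to 0$; equivalently $|T_k|=o(2^k)$.

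To contradict this, I would feed the hypothesis $\sum \varepsilon_i(-1)^ix_i=\sum \varepsilon_i'(-1)^ix_i$ (with $(\varepsilon_i)\neq(\varepsilon_i')$) into Lemma~\ref{KeyLemmaExample}, obtaining that
\[
B^x_k:=\sum_{i=N_k+1}^{N_{k+1}}(\varepsilon_i-\varepsilon_i')(-1)^ix_i
\]
satisfies $|B^x_k|\geq\delta_k/2$ for infinitely many $k$. Substituting the explicit form $x_i=\frac{\delta_k}{2\cdot 4^{k+1}}+2^{-2^{n_k}-(i-N_k)}$ valid on the $k$-th block produces the decomposition
\[
B^x_k=\frac{\delta_k}{2\cdot 4^{k+1}}\,T_k+R_k,\qquad |R_k|\leq\sum_{j=1}^{2^{n_k}}2^{-2^{n_k}-j}<2^{-2^{n_k}+1}.
\]
The point is that the very same integer $T_k$ governs both block differences: it appears in the $y$-block sum through the constancy of $y_i$ on the block, and in the leading part of the $x$-block sum through the constant term $\delta_k/(2\cdot 4^{k+1})$ of $x_i$ on the block. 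A lower bound on $|T_k|$ coming from the $x$-side will therefore clash with $|T_k|=o(2^k)$.

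The only routine verification is that $R_k$ is negligible relative to $\delta_k$; by construction $\delta_{k+1}=4^{-2^{n_k}}$ while $|R_k|<2^{-2^{n_k}+1}$, hence $|R_k|$ is of much smaller order even than $\delta_{k+1}$, and in particular $|R_k|<\delta_k/4$ for every $k$. Combined with $|B^x_k|\geq\delta_k/2$ this yields $|T_k|\geq 2\cdot 4^k$ along the same infinite subsequence, whence $|T_k|/2^k\geq 2^{k+1}\to\infty$ along that subsequence, contradicting $|T_k|=o(2^k)$. I expect the only conceptually delicate step to be recognising $T_k$ as the common integer in both the $x$- and $y$-block differences; once this is seen, the size comparison $|R_k|\ll \delta_k$ follows immediately from the super-exponential decay of $\delta_k$ built into the inductive construction.
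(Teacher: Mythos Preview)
Your approach is essentially the paper's: both pivot on the integer $T_k=\sum_{i=N_k+1}^{N_{k+1}}(\varepsilon_i-\varepsilon_i')(-1)^i$ and show $|T_k|\gtrsim 4^k$ for infinitely many $k$, which contradicts $|T_k|=o(2^k)$ coming from convergence of the $y$-difference. The paper packages the lower bound on $|T_k|$ into the ``moreover'' clause of Lemma~\ref{KeyLemmaExample}, while you re-derive it inline from the first assertion of that lemma; the arithmetic is the same.

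One slip needs correcting. Your claim that ``$|R_k|$ is of much smaller order even than $\delta_{k+1}$'' is backwards: since $\delta_{k+1}=4^{-2^{n_k}}=2^{-2\cdot 2^{n_k}}$ while $|R_k|<2^{-2^{n_k}}$, in fact $|R_k|$ is enormously \emph{larger} than $\delta_{k+1}$. The inequality you actually need, $|R_k|<\delta_k/4$, is nonetheless true for a different reason. The construction records that $x_i<\delta_k/4^{k+1}$ on the $k$-th block; applied to $x_{N_k+1}=\frac{\delta_k}{2\cdot 4^{k+1}}+2^{-2^{n_k}-1}$ this forces $2^{-2^{n_k}}<\delta_k/4^{k+1}$, and hence
\[
|R_k|\le\sum_{j=1}^{2^{n_k}}2^{-2^{n_k}-j}<2^{-2^{n_k}}<\frac{\delta_k}{4^{k+1}}\le\frac{\delta_k}{16}<\frac{\delta_k}{4}.
\]
With this fix your argument goes through and yields $|T_k|\ge 2\cdot 4^k$ along the infinite set of $k$'s supplied by Lemma~\ref{KeyLemmaExample}, exactly as required.
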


\begin{proof}
By the moreover part of Lemma \ref{KeyLemmaExample} there are infinitely many $k$ such that 
\[
\Big\vert\vert\{i\in(N_k,N_{k+1}]\cap 2\N: \varepsilon_i\neq\varepsilon_i'\}\vert-\vert\{i\in(N_k,N_{k+1}]\cap (2\N-1): \varepsilon_i\neq\varepsilon_i'\}\vert\Big\vert>4^k.
\]
That means that for infinitely many $k$ 
\begin{equation}
\Big\vert\sum_{i=N_k+1}^{N_{k+1}}\varepsilon_i(-1)^iy_i-\sum_{i=N_k+1}^{N_{k+1}}\varepsilon_i'(-1)^iy_i\Big\vert=\Big\vert\sum_{i=N_k+1}^{N_{k+1}}\varepsilon_i(-1)^i\frac{1}{2^k}-\sum_{i=N_k+1}^{N_{k+1}}\varepsilon_i'(-1)^i\frac{1}{2^k}\Big\vert\geq \frac{4^k}{2^k}=2^k.
\end{equation}
By the Cauchy condition for $\sum_{i=1}^\infty\varepsilon_i((-1)^ix_i,(-1)^iy_i)$ for almost all $k$ 
\[
\Big\vert\sum_{i=N_k+1}^{N_{k+1}}\varepsilon_i(-1)^iy_i\Big\vert\leq 1.
\]
Therefore for infinitely many $k$
\[
\Big\vert\sum_{i=N_k+1}^{N_{k+1}}\varepsilon_i(-1)^iy_i\Big\vert\geq 2^k-1.
\]
This shows that the series $\sum_{i=1}^\infty\varepsilon_i'((-1)^ix_i,(-1)^iy_i)$ does not fulfill the Cauchy condition. 
\end{proof}

By Lemma \ref{SecondLemmaExample} the series $\sum_{i=1}^\infty((-1)^ix_i,(-1)^iy_i)$ has the desired property that each vertical section of $\on{A}((-1)^ix_i,(-1)^iy_i)$ has at most one element.  

We would like to end the paper with the list of open questions. 
\begin{problem}\label{Q1}
Let $\sum_{n=1}^\infty(x_n,y_n)$ be a conditionally convergent series on the plane with $\on{SR}(x_n,y_n)=\R^2$. Is the reduction property necessary for $\on{A}(x_n,y_n)=\R^2$?
\end{problem}
It is sometimes hard to check the reduction property. In particular we do not know the answer for the following.
\begin{problem}\label{Q2}
Does $\on{A}(\frac{(-1)^n}{n},\frac{(-1)^n}{\sqrt{n}})=\R^2$?
\end{problem}
We have constructed two series with $\on{SR}(x_n)=\mathbb{R}^2$ and $\on{A}(x_n)\neq\mathbb{R}^2$ -- the first in \cite{BGM} and the second in this section. Both of them are small subset of the plane. We would like to know if this is the general phenomena. 
\begin{problem}
Let $\sum_{n=1}^{\infty} x_{n}$ be a conditionally convergent series on the plane with $\on{SR}(x_n)=\mathbb{R}^2$. Is it true that if $\on{A}(x_n)\neq\mathbb{R}^2$, then $\on{A}(x_n)$ is a set of Lebesgue measure zero or of the first Baire category?
\end{problem}

Suppose that $\sum_{n=1}^\infty x_n$ is a series with $\on{SR}(x_n)=\mathbb{R}^2$ and $\on{A}(x_n)\neq\mathbb{R}^2$. Take $a\in \R^2\setminus \on{A}(x_n)$. Since $\on{SR}(x_n)=\mathbb{R}^2$, there is a permutation $\sigma$ of natural numbers such that $\sum_{n=1}^\infty x_{\sigma(n)}=a$. Therefore $a\in\on{A}(x_{\sigma(n)})$. This shows that the achievement set of conditionally convergent series is not invariant under taking rearrangements. On the other hand a rearrangement does not affect Levy vectors. Therefore if $\sum_{n=1}^\infty x_n$ has at least three Levy vectors, then $\on{A}(x_{\sigma(n)})=\R^2$ for any permutation $\sigma$. We do not know what happens if the series has only two Levy vectors.

\begin{problem}
Let $\sum_{n=1}^{\infty} x_{n}$ be a conditionally convergent series on the plane such that $\on{SR}(x_n)=\mathbb{R}^2$, $\on{A}(x_n)=\mathbb{R}^2$ and $\vert\on{L}(x_n)\vert=2$. Is it true that $\on{A}(x_{\sigma(n)})=\mathbb{R}^2$ for any permutation $\sigma$?
\end{problem}

\end{document}